\newtheorem{theorem}{Theorem}
\newtheorem{lemma}[theorem]{Lemma}
\newtheorem{corollary}[theorem]{Corollary}
\newtheorem{conjecture}[theorem]{Conjecture}
\theoremstyle{definition}
\theoremstyle{remark}
\newtheorem{example}[theorem]{Example}
\newcommand{\A}{\mathcal{A}}
\newcommand{\R}{\mathbb{R}}
\newcommand{\Z}{\mathbb{Z}}
\newcommand{\C}{\mathcal{C}}
\title{Approval Voting in Product Societies}
\author{Kristen Mazur, Mutiara Sondjaja, Matthew Wright, Carolyn Yarnall}
\begin{document}

\begin{abstract}
In approval voting, individuals vote for all platforms that they find acceptable. In this situation it is natural to ask: When is agreement possible? What conditions guarantee that some fraction of the voters agree on even a single platform? 
Berg et.\ al.\  found such conditions when voters are asked to make a decision on a single issue that can be represented on a linear spectrum. In particular, they showed that if two out of every three voters agree on a platform, there is a platform that is acceptable to a majority of the voters.  Hardin developed an analogous result when the issue can be represented on a circular spectrum. 
We examine scenarios in which voters must make two decisions simultaneously. For example, if voters must decide on the day of the week to hold a meeting and the length of the meeting, then the space of possible options forms a cylindrical spectrum. Previous results do not apply to these multi-dimensional voting societies because a voter's preference on one issue often impacts their preference on another.  
We present a general lower bound on agreement in a two-dimensional voting society, and then examine specific results for societies whose spectra are cylinders and tori. 
\end{abstract}

\maketitle

\section{Introduction.}

Several articles in the \textsc{Monthly} have analyzed approval voting systems. The pioneering article by Berg et.\ al.\ examined agreement in \emph{linear societies}, in which voters may approve of an interval of options chosen from a linear spectrum, and higher-dimensional analogues \cite{Berg}. Recognizing that a circular model is often more appropriate, Hardin studied approval voting in \emph{circular societies} \cite{Hardin}. Furthermore, Klawe et.\ al.\ considered \emph{double-interval societies}, in which each voter may approve of two disjoint intervals in a linear spectrum \cite{Klawe}.

The prior work on approval voting theory considered primarily situations in which  voters are asked to make a decision on a single issue. However, often voters are asked to make a decision on two or more issues simultaneously. For example, suppose a group of people must decide when to hold a meeting and how long the meeting should be. Further suppose that each person is permitted to choose both an interval of possible start times and an interval of possible meeting lengths. The approval set of each voter can be represented as a rectangle in a two-dimensional spectrum, as in \cref{fig:meeting}. 

The example of \cref{fig:meeting} illustrates the importance of considering multiple choices simultaneously, rather than selecting a position of maximal approval in each choice separately. \cref{fig:meeting} displays the approval sets of five voters. Clearly, a majority of voters prefers a meeting with a later start time. A different majority prefers a shorter meeting length. If these preferences were considered independently, the chosen meeting parameters would fall in the purple rectangle, which is acceptable to only a single voter. However, considering start time and meeting length  simultaneously, then the maximal approval occurs in the areas where two voters agree.

\begin{figure}[h]
\begin{center}
    \begin{tikzpicture}
    	\draw[<->] (0,4.1) -- (0,0) -- (4.5,0);
        \node[below] at (2.25,0) {meeting start time};
        \node[left,rotate=90,anchor=south] at (0,2.05) {meeting length};
        \fill[blue,opacity=0.5] (0.5,0.3) rectangle (2,1.8);
        \fill[green,opacity=0.5] (0.7,0.5) rectangle (2.2,2);
        \fill[red,opacity=0.5] (2.5,2.5) rectangle (4,4);
        \fill[orange,opacity=0.5] (2.7,2.3) rectangle (4.2,3.8);
        \fill[violet,opacity=0.5] (3,0.8) rectangle (3.8,1.6);
    \end{tikzpicture}
    \caption{Approval regions of five voters who are asked to give approval intervals for both the start time of a meeting and the meeting length.}
    \label{fig:meeting}
\end{center}
\end{figure}
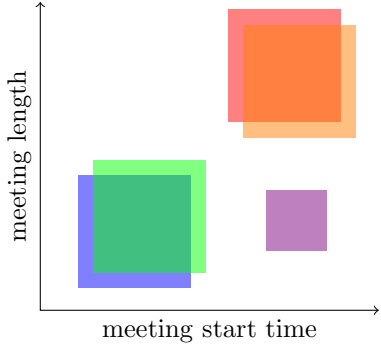

This paper examines approval voting in \emph{product societies}, in which voters are asked to make multiple decisions simultaneously. For example, suppose a committee wants to create an annual conference. They must decide on the length of the conference (in days) and what time of year to hold the conference. Since the former spectrum is linear and the latter is circular, approval sets can be modeled as rectangles on a cylinder. Thus, this society is a \textit{cylindrical society}. Or, in \cite{Hardin} Hardin gave the circular society example of a group of scouts lost in the woods needing to decide on a compass bearing. Now  suppose that these scouts must also decide on how long to walk in a given direction. Then again this is a cylindrical society. Finally, suppose an astronomy class must decide on a time of year to look at the stars and on a compass direction to point the telescope.  Both of these spectra are circular, and their product is a torus. Hence, we have created a \textit{toroidal society.}

In such product societies, we answer the following question: ``If we know voters' agreement on each choice individually, what can we say about their agreement on all issues simultaneously?''
After analyzing such product societies in general, we give more specific results for  cylindrical and torodial societies.

In the next section we review results from \cite{Berg} and \cite{Hardin}, define product societies, and discuss why the linear and circular results do not always extend to multidimensional societies. \Cref{agreement_nums} provides a general theorem about the lower bound on the maximum number of voters who agree in product societies. In \Cref{sec:cylindrical} we employ the concept of an agreement graph to give a new lower bound on the agreement of cylindrical societies. \Cref{sec:toroidal} presents a similar theorem for toroidal societies, along with the construction of a type of ``uniform" toroidal society with low overall agreement.

\section{Preliminaries.}

We use terminology from \cite{Berg} and \cite{Hardin}. A \textit{platform} is an item or candidate for which an individual can vote. We call the collection of all possible platforms a \textit{spectrum} $X$. Further, let $V$ be a finite collection of voters. Each voter $v$ in $V$ has an \textit{approval set} $A_v \subseteq X$ of platforms acceptable to the voter. We denote the collection of all approval sets of all voters in $V$ by $\mathcal{A}$. Then define a \textit{society} $S$ to be a triple $(X,V,\mathcal{A})$ consisting of a spectrum $X$, a finite collection of voters $V$ and the corresponding collection of approval sets $\mathcal{A}$. Define the \textit{size} of $S$, denoted $|S|$, to be the number of voters in $S$. The \textit{agreement number} $a(S)$ of a society $S$ is the greatest number of voters that approve a single platform, and the \textit{agreement proportion} is the ratio $\frac{a(S)}{|S|}$. Finally, let $k$ and $m$ be integers such that $1 \le k \le m \le |S|$. A society is \textit{$(k,m)$-agreeable} if $k$ out of every $m$ voters agree on some platform. 

Berg et.\ al.\ introduced the notion of $(k, m)$-agreeability to model the diversity of preferences within a \textit{linear} societies \cite{Berg}. The spectrum of a linear society is a closed subset of $\mathbb{R}$ and approval sets are closed and bounded intervals in $\mathbb{R}$.  We provide an example of a $(2,3)$-agreeable linear society with agreement number 4 in \cref{fig:linear}.

Intuitively, in a $(k,m)$-agreeable society, $k$ and $m$ parametrize the amount of overlap between voters' approval sets; the closer $k$ is to $m$, the more similar the preferences of any $m$ voters.  For example, if a society is $(k, k)$-agreeable then every group of $k$ voters unanimously approves of some platform.  

Since $(k, k)$-agreeability seems to be a very strong condition, we might predict that this would imply that there must be a platform that is approved by a large proportion of voters.  Indeed, if a linear society is $(k, k)$-agreeable with $k\geq 2$, then Helly's Theorem\footnote{More generally, Helly's Theorem implies that a $(d+1, d+1)$-agreeable society whose approval sets are convex subsets of $\mathbb{R}^d$ must contain a platform that is approved by every voter.}, a result about intersections of convex sets,  asserts that there must be a platform that is approved by \emph{every} voter.

While $(k, k)$-agreeability can lead to strong conclusions, this condition is very restrictive and may not be realistic for a typical society.  The condition of $(k, m)$-agreeability, for $k \leq m$, thus allows us to model societies that are very agreeable, very ``disagreeable,'' or something in between.  In particular, the larger $m$ is relative to $k$, the more ``disagreeable'' a $(k, m)$-agreeable society might be.  Note that if a society is $(k, m)$-agreeable, it is also $(k', m)$-agreeable for $k' \leq k$.

For example, in a $(1,m)$-agreeable society \emph{it may be possible} to find a group of $m$ voters in which none of the voters agree on a platform.  Note, however, that even if a society is $(1, m)$-agreeable, it does not necessarily have a small agreement proportion; $(1,m)$-agreeability simply asserts that among an arbitrary group of $m$ voters in the society, we cannot guarantee that there are two voters who approve of a common platform.  Thus, conclusions that we derive from assuming $(k, m)$-agreeability are only lower bounds to agreement proportions.  Nevertheless, we might intuitively predict that the lower bound on agreement proportions of such societies will be small.  The main theorem of \cite{Berg} stated below confirms this intuition for linear societies.

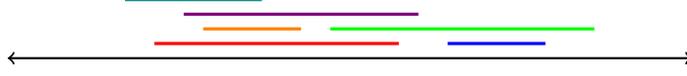
\begin{figure}[ht]
\begin{center}
	\begin{tikzpicture}[scale=1.3]
        \draw[<->,thick] (-1,0) -- (6,0);
        \draw[red, very thick] (0.5,0.15) -- (3,0.15);
    	\draw[blue, very thick] (3.5,0.15) -- (4.5,0.15);
    	\draw[orange, very thick] (1,0.3) -- (2,0.3);
    	\draw[green, very thick] (2.3,0.3) -- (5,0.3);
    	\draw[violet, very thick] (0.8,0.45) -- (3.2,0.45);
    	\draw[teal, very thick] (0.2,0.6) -- (1.6,0.6);
    \end{tikzpicture}
    \end{center}
	\caption{A $(2,3)$-agreeable linear society with six voters and agreement number $4$. The black line is the (infinite) spectrum, and each colored segment represents the approval set of one voter.}
    \label{fig:linear}
\end{figure}

\begin{theorem}[Agreeable Linear Society Theorem \cite{Berg}]\label{thm:linear}
Let $2 \le k \le m$. In a $(k,m)$-agreeable linear society $S$, the best possible lower bound for the agreement proportion of $S$ is $(k-1)/(m-1)$. 
\end{theorem}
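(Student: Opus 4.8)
The plan is to establish the two halves of the statement separately: that every $(k,m)$-agreeable linear society $S$ has agreement proportion at least $(k-1)/(m-1)$, and that no larger quantity is a valid lower bound.

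For the lower bound I would use the one-dimensional Helly property: since approval sets are closed bounded intervals, a set of voters agrees on a platform if and only if their intervals pairwise intersect. Write $a=a(S)$ and $n=|S|$. Because every point of the spectrum lies in at most $a$ of the intervals, the greedy left-to-right coloring (process the intervals in order of left endpoint; at the left endpoint of the current interval at most $a$ intervals overlap it, so at most $a-1$ colors are forbidden) produces a proper coloring with $a$ colors in which each color class is a family of pairwise disjoint intervals. If $a<k$ the whole society already contains no $k$ pairwise-intersecting intervals, which contradicts $(k,m)$-agreeability because $m\le n$; so $a\ge k$ and in particular there are at least $k-1$ nonempty color classes. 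Let $W$ be the union of the $k-1$ largest of them; averaging over the $a$ classes gives $|W|\ge \frac{(k-1)n}{a}$. The key point is that no $k$ voters of $W$ can agree: a pairwise-intersecting family meets each color class in at most one interval, so $W$ has no such family of size $k$. Hence if $|W|\ge m$ then any $m$ voters drawn from $W$ would form an $m$-subset with no $k$ agreeing, contradicting $(k,m)$-agreeability; therefore $|W|\le m-1$. Combining the two bounds on $|W|$ yields $\frac{(k-1)n}{a}\le m-1$, i.e. $\frac{a}{n}\ge \frac{k-1}{m-1}$.

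For sharpness I would exhibit $(k,m)$-agreeable societies whose agreement proportion is (arbitrarily close to) $(k-1)/(m-1)$. A natural candidate is a ``staircase'': take voters $1,\dots,n$ with approval sets $[1,1+c],[2,2+c],\dots,[n,n+c]$ on the line, with $c$ chosen to be roughly $(k-1)\cdot\frac{n}{m-1}$. The maximum number of these intervals through a common point is about $c+1$, so the agreement proportion tends to $(k-1)/(m-1)$ as $n\to\infty$. To verify $(k,m)$-agreeability one argues by contradiction: if $m$ voters with indices $i_1<\dots<i_m$ had no $k$ of them agreeing, then any $k$ consecutive of these intervals must be spread out by more than $c$, so $i_{t+k-1}-i_t\ge c+1$ for every admissible $t$; telescoping along a chain of such inequalities forces $i_m-i_1$ to exceed $n-1$, which is impossible. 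Equivalently, one can build the extremal society from $m-1$ tightly clustered groups of intervals, each voter's interval spanning $k-1$ consecutive clusters, and check directly that every $m$ voters meet $k$-fold inside some cluster.

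The coloring-and-averaging step is routine; the part I expect to be the genuine obstacle is the extremal construction and the proof that it really is $(k,m)$-agreeable, since the combinatorics of how $m$ arbitrarily chosen intervals can be spread along the line is delicate, and it is exactly this that upgrades $(k-1)/(m-1)$ from a lower bound to the best possible one.
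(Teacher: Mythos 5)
First, note that the paper does not prove this theorem; it is quoted from Berg et al.\ \cite{Berg}, so there is no internal proof to compare against. On its own merits, the first half of your argument is correct and is essentially the standard proof: by Helly's theorem in dimension one, agreement of a set of voters is equivalent to pairwise intersection of their intervals, the greedy left-to-right coloring partitions the $n$ intervals into $a=a(S)$ independent (pairwise-disjoint) classes, any $k-1$ classes union to a set with no $k$ mutually agreeing voters and hence to a set of size at most $m-1$, and averaging gives $\frac{(k-1)n}{a}\le m-1$. That part I would accept as written.

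The sharpness half has a genuine gap, and it is exactly where you predicted trouble. Your staircase $[i,i+c]$ with $c\approx(k-1)\frac{n}{m-1}$ is $(k,m)$-agreeable only when $(k-1)\mid(m-1)$: your telescoping chain $i_1,i_k,i_{2k-1},\dots$ takes only $\lfloor\frac{m-1}{k-1}\rfloor$ steps, so it forces $i_m-i_1\ge\lfloor\frac{m-1}{k-1}\rfloor(c+1)$, which exceeds $n-1$ only when $(k-1)$ divides $(m-1)$ (otherwise one must take $c+1>\frac{n-1}{\lfloor(m-1)/(k-1)\rfloor}$, yielding a strictly larger agreement proportion). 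A concrete failure: for $k=3$, $m=4$, $c=\frac{2n}{3}$, the four intervals with left endpoints $1,2,c+2,c+3$ contain no three that pairwise intersect. Your fallback construction with $m-1$ clusters and intervals spanning $k-1$ consecutive clusters gives agreement proportion $\frac{k-1}{m-k+1}$, not $\frac{k-1}{m-1}$. In fact your own color-class inequality shows that exact attainment of $\frac{k-1}{m-1}$ forces every color class to have size exactly $\frac{m-1}{k-1}$, so divisibility is necessary; and for $m\le 2k-2$ the paper's \cref{2box} (really Berg et al.'s Theorem 10) gives $a(S)\ge|S|-m+k$, which already exceeds $\frac{k-1}{m-1}|S|$ for all admissible $|S|$ when, e.g., $(k,m)=(3,4)$. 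So ``best possible'' here must be read as ``attained for suitable $(k,m)$ and $|S|$'' rather than for all parameters, and a complete proof of the optimality claim needs either the divisibility hypothesis made explicit or a different construction; the lower-bound inequality itself, which is what the rest of the paper actually uses, is the part you have fully established.
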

In other words, if $S$ is a $(k,m)$-agreeable linear society, then $\frac{a(S)}{|S|} \ge (k-1)/(m-1),$ and there exists a $(k,m)$-agreeable linear society such that the agreement proportion equals $(k-1)/(m-1)$.

Hardin proved a similar theorem for circular societies \cite{Hardin}. A \textit{circular} society is a society in which the spectrum is a circle and the approval sets are connected subsets of the circle. Thus, approval sets are typically arcs. We give an example of a $(2,2)$-agreeable circular society with agreement number 2 in \cref{fig:circular}. Hardin's main result in \cite{Hardin} is the following lower bound on the agreement proportion of circular societies.

\begin{figure}[ht]
    \begin{center}
    \begin{tikzpicture}
        \draw[thick] (0,0) circle (1cm);
        \draw[blue, very thick] (0:1.15) arc (0:200:1.15cm);
        \draw[green, very thick] (120:1.3) arc (120:320:1.3cm);
        \draw[red, very thick] (240:1.45) arc (240:440:1.45cm);
    \end{tikzpicture}

	\caption{A $(2,2)$-agreeable circular society with three voters and agreement number $2$. The black circle is the spectrum, and colored arcs represent approval sets.}
    \label{fig:circular}
        \end{center}
\end{figure}
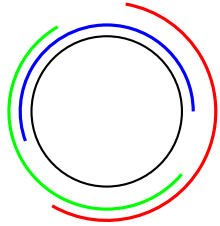

\begin{theorem}[Agreeable Circular Society Theorem \cite{Hardin}]\label{thm:circular}
Let $2 \le k \le m$. If $S$ is a $(k,m)$-agreeable circular society, then $$\frac{a(S)}{|S|} > \frac{k-1}{m},$$ and there exists a $(k,m)$-agreeable circular society such that $a(S) = \lfloor \frac{k-1}{m}|S|\rfloor + 1$.\end{theorem}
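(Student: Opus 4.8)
The plan is to establish the two assertions separately---the universal lower bound $a(S)/|S| > (k-1)/m$, and the existence of a $(k,m)$-agreeable circular society attaining $\lfloor\frac{k-1}{m}|S|\rfloor+1$---and throughout to fall back on Theorem~\ref{thm:linear} whenever the circular structure degenerates, so as to isolate the genuinely circular phenomenon.

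For the lower bound I would first dispose of the easy cases. If some approval arc is the whole circle, or more generally if the approval arcs fail to cover the circle, then cutting at a platform that no voter approves turns $S$ into a linear society with the same voters, the same agreement number, and the same $(k,m)$-agreeability; Theorem~\ref{thm:linear} then gives $a(S)/|S| \ge (k-1)/(m-1) > (k-1)/m$, which is more than enough. So the substantive case is that every platform is approved by at least one voter. Here I would argue by contradiction: suppose $a(S) \le \frac{k-1}{m}n$ with $n=|S|$, so every platform is approved by at most $\frac{k-1}{m}n$ voters. Passing to the complementary arcs $\{X\setminus A_v\}$, every platform is now covered at least $n-\frac{k-1}{m}n = \frac{m-k+1}{m}n$ times. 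The engine is then a covering lemma for circular arcs: from any family of $n$ arcs that covers the circle $j$-fold, one can select $m$ of them still covering the circle $\lfloor jm/n\rfloor$-fold. Applying this with $j = n - a(S) \ge \frac{m-k+1}{m}n$ extracts $m$ voters whose complementary arcs cover the circle $(m-k+1)$-fold; equivalently, every platform is approved by at most $m-(m-k+1)=k-1$ of these $m$ voters, contradicting $(k,m)$-agreeability.

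I expect the covering lemma to be the main obstacle, and in particular the passage from a line to the circle inside it. On a line the analogous statement is easy---sort the intervals by left endpoint and keep every $\lceil n/m\rceil$-th one, using that the intervals meeting a fixed point form a contiguous block---but on the circle a naive averaging argument (a uniformly random $m$-subset covers each point $jm/n$-fold in expectation) is exactly on the boundary and cannot be rounded directly; this is precisely the reason the circular constant $(k-1)/m$ is strictly smaller than the linear $(k-1)/(m-1)$. The plan for the lemma is to fix a platform $p$, set aside the $\ge j$ arcs containing $p$, cut the circle at $p$ to linearize the remaining arcs, select proportionally on the line while committing to retain an appropriate number of the arcs through $p$, and check that the wrap-around coverage near $p$ is absorbed by the committed arcs; a direct ``peel off nested subcovers'' argument also seems plausible but needs care, since a minimal subcover need not drop the covering multiplicity uniformly.

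For sharpness I would give an explicit society. Fix any $n\ge m$, take the $n$ voters' approval sets to be congruent arcs whose counterclockwise endpoints are equally spaced around the circle, with the common length chosen just large enough that each arc contains exactly $w := \lfloor\frac{k-1}{m}n\rfloor+1$ of the $n$ equally spaced endpoints. Counting endpoints inside an arc gives $a(S)=w = \lfloor\frac{k-1}{m}|S|\rfloor+1$. For $(k,m)$-agreeability, take any $m$ of the $n$ equally spaced endpoints and average over the $n$ windows obtained by translating one arc by multiples of $\tfrac{2\pi}{n}$: each endpoint lies in exactly $w$ such windows, so the average number of the chosen endpoints per window is $wm/n > k-1$; hence some window contains at least $k$ of the chosen endpoints, and the corresponding $k$ voters' arcs (all congruent, all containing that cluster of endpoints) share a common platform. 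Thus this society is $(k,m)$-agreeable with agreement number exactly $\lfloor\frac{k-1}{m}|S|\rfloor+1$, matching the lower bound.
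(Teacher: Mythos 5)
This theorem is not proved in the paper at all---it is quoted from \cite{Hardin}, whose argument proceeds by transforming an arbitrary circular society into a \emph{uniform} one. Your sharpness construction is correct and is in fact exactly Hardin's uniform circular society: congruent arcs with equally spaced left endpoints, each containing $w=\lfloor\frac{k-1}{m}n\rfloor+1$ endpoints, with the window-averaging count $wm/n>k-1$ correctly forcing some window to capture $k$ of any $m$ chosen endpoints (and those $k$ arcs do share the rightmost point of that window). That half of your proposal is complete.

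The lower bound, however, has a genuine gap. The complementation step and the arithmetic $j\ge n-\frac{k-1}{m}n$, $\lfloor jm/n\rfloor\ge m-k+1$ are fine, but they merely translate the theorem into dual language: your ``covering lemma'' (from $n$ arcs covering the circle $j$-fold one can select $m$ covering it $\lfloor jm/n\rfloor$-fold) is, by the very same complementation applied in reverse, \emph{equivalent} to the statement being proved in the relevant parameter range. So all of the content of \cref{thm:circular} has been deferred to an unproven lemma, and the plan you give for it is not yet an argument: ``select proportionally on the line while committing to retain an appropriate number of the arcs through $p$'' is exactly where the difficulty (the gap between the circular constant $\frac{k-1}{m}$ and the linear $\frac{k-1}{m-1}$) lives, and no procedure or counting is specified. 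Worse, the linear warm-up you invoke rests on the claim that the intervals containing a fixed point form a contiguous block when sorted by left endpoint; this is false (consider $[0,10]$, $[1,2]$, $[3,10]$ and the point $5$), so even the line case of your selection scheme is not established. To close the gap you would need either a genuine proof of the covering lemma or Hardin's route: replace the society by a uniform one without increasing the agreement number or destroying $(k,m)$-agreeability, then do the endpoint count directly.
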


A product society is, in essence, the Cartesian product of two societies. More specifically, let $(X, V, \mathcal{A}^X)$ and $(Y, V, \mathcal{A}^Y)$ be societies that share the same collection of voters $V$. In the product of these two societies, voters from the set $V$ approve of platforms on the two independent spectra and the approval set of a single voter is the product of their approval sets from $X$ and $Y$. Thus, a {\emph{product society}} is the triple $(X\times Y, V, \mathcal{A}^{X\times Y})$, where $\mathcal{A}^{X\times Y} = \{A^X_v \times A^Y_v \mid v \in V \}$.

If $S=(X \times Y, V, \mathcal{A}^{X \times Y})$ is a product society in which both spectra $X$ and $Y$ are linear, then $S$ is a \textit{2-box} society. Its spectrum $X \times Y$ is $\mathbb{R}^2$, and the approval sets $A_v^X \times A_v^Y$ of voters in $V$ are rectangles. We note that one may wish to study a society whose spectrum of platforms is a finite rectangle. In this case, we could still consider the collection of approval sets as subsets of the larger space, $\R^2$. Berg et.\ al.\ first defined a 2-box society in \cite{Berg}. In fact, for $d \ge 1$ they generalize the notion of a 2-box society to that of a \textit{$d$-box} society whose spectrum is $\mathbb{R}^d$. Approval sets in $d$-box societies are then boxes in $\mathbb{R}^d$. Theorem 13 of \cite{Berg} gives a lower bound on the agreement number of a $(k,m)$-agreeable $d$-box society. However, we will just state the theorem for a 2-box society. 

\begin{theorem}[Agreeable 2-Box Society Theorem \cite{Berg}]\label{2box} Let $m, k \ge 2$ be integers with $k \le m \le 2k-2$, and let $S$ be a $(k,m)$-agreeable 2-box society. Then the agreement number of $S$ is at least $|S| -m+k$, and this bound is best possible.
\end{theorem}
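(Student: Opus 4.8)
The plan is to recast the statement in terms of how the approval boxes overlap and then combine Helly's theorem with a matching argument. Since every approval set is a box $A_v^X\times A_v^Y$, a product of two closed intervals, Helly's theorem in $\R$ applied to each coordinate shows that a finite family of boxes in $\R^2$ has a common point if and only if its members pairwise intersect (boxes have ``Helly number'' $2$, not $3$). Hence ``$j$ voters agree on a platform'' is the same as ``the boxes of those $j$ voters pairwise intersect,'' and $a(S)$ is exactly the largest number of voters with pairwise intersecting boxes. So it suffices to prove: if $n=|S|$ and no $n-m+k$ of the boxes pairwise intersect, then $S$ is not $(k,m)$-agreeable; and separately to exhibit a society attaining the bound.

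\textbf{Main argument.} Suppose for contradiction that $a(S)\le n-m+k-1$. Let $\mathcal Q$ be a maximum family of pairwise intersecting boxes, with $|\mathcal Q|=a(S)$ and (by Helly) a common point $p^\ast$; note $|\mathcal Q|=a(S)\ge k$ since $S$ is $(k,m)$-agreeable and $n\ge m$. Then $|V\setminus\mathcal Q|=n-a(S)\ge m-k+1$. By maximality of $\mathcal Q$, for each voter $b\notin\mathcal Q$ the family $\mathcal Q\cup\{B_b\}$ has no common point, so Helly's theorem provides a \emph{blocker} $q_b\in\mathcal Q$ with $B_b\cap q_b=\varnothing$. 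Choose any $m-k+1$ voters $b_1,\dots,b_{m-k+1}$ outside $\mathcal Q$ together with blockers $q_{b_1},\dots,q_{b_{m-k+1}}\in\mathcal Q$, and suppose these blockers can be taken pairwise distinct (this is the crux, discussed below). Since $m\le 2k-2$ gives $m-k+1\le k-1\le|\mathcal Q|$, enlarge $\{q_{b_1},\dots,q_{b_{m-k+1}}\}$ to a subfamily $\mathcal Q'\subseteq\mathcal Q$ with exactly $k-1$ boxes, and put $W=\{b_1,\dots,b_{m-k+1}\}\cup\mathcal Q'$, a set of $m$ voters. If some $k$ of them agreed, the corresponding boxes would pairwise intersect; but whenever $b_i$ lies in that $k$-set, $q_{b_i}$ cannot (as $B_{b_i}\cap q_{b_i}=\varnothing$), so if $j$ of the $b_i$ appear then at most $(k-1)-j$ boxes of $\mathcal Q'$ can appear, giving at most $j+(k-1-j)=k-1<k$ agreeing voters — a contradiction. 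Thus $W$ shows $S$ is not $(k,m)$-agreeable, and the bound $a(S)\ge n-m+k$ follows.

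\textbf{The main obstacle.} Everything above is routine except the claim that the blockers can be chosen distinct, i.e.\ that the bipartite ``blocker graph'' between the voters of $V\setminus\mathcal Q$ and the boxes of $\mathcal Q$ has a matching saturating some $m-k+1$ of the former. I expect to get this from Hall's marriage theorem: if no such matching existed, some subfamily $\mathcal B'\subseteq V\setminus\mathcal Q$ would have all of its blockers confined to fewer than $|\mathcal B'|$ boxes of $\mathcal Q$, so more than $|\mathcal Q|-|\mathcal B'|$ boxes of $\mathcal Q$ would meet every box of $\mathcal B'$; passing to a minimal violating $\mathcal B'$ and feeding it, together with those boxes of $\mathcal Q$, back into Helly's theorem and the maximality of $\mathcal Q$ should force a pairwise intersecting family strictly larger than $\mathcal Q$, a contradiction. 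Pinning down this bookkeeping (and the minimal-subfamily reduction) is the part that needs the most care.

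\textbf{Optimality.} For the ``best possible'' half, take $n-m+k$ voters all with approval box $[0,1]\times[0,1]$ and $m-k$ further voters with approval boxes $[2i,2i+1]\times[2i,2i+1]$ for $i=1,\dots,m-k$. The unit square lies in exactly $n-m+k$ of the boxes and every other box lies in only one of them, so $a(S)=n-m+k$; and any $m$ voters include at least $k$ of the first type, which agree, so $S$ is $(k,m)$-agreeable. Hence the bound cannot be improved.
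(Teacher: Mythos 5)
First, note that the paper does not prove this statement at all: it is imported verbatim from Berg et al.\ (their Theorem 13), and the graph-theoretic core you would need is exactly their Theorem 11, which this paper quotes separately as \cref{berg_thm11}. Your reduction to that core is correct and is the standard one: axis-parallel boxes in $\R^2$ do have Helly number $2$ (project to each axis and apply one-dimensional Helly), so $a(S)$ equals the clique number of the agreement graph, and the theorem becomes the purely combinatorial claim that a graph on $n\ge m$ vertices in which every $m$ vertices contain a $k$-clique has $\omega\ge n-m+k$ when $k\le m\le 2k-2$. Your optimality construction is also correct. The problem is that the step you yourself flag as ``the crux'' --- that some $m-k+1$ voters outside a maximum clique $\mathcal Q$ admit pairwise distinct blockers --- is not established, and the Hall/K\"onig route you sketch for it does not close. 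If Hall's condition fails, you get a set $\mathcal B'$ of outside voters whose blockers all lie in a small set $N'\subseteq\mathcal Q$ with $|N'|<|\mathcal B'|$, so every box of $\mathcal B'$ meets every box of $\mathcal Q\setminus N'$. But to contradict the maximality of $\mathcal Q$ you would need to adjoin to $\mathcal Q\setminus N'$ a \emph{pairwise-intersecting} subfamily of $\mathcal B'$ of size greater than $|N'|$, and nothing forces the boxes of $\mathcal B'$ to intersect one another --- they could be pairwise disjoint. All the failure of Hall's condition actually yields is $\omega(G[\mathcal B'])\le|N'|$, which is not a contradiction; passing to a minimal violating $\mathcal B'$ does not change this.

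A further warning sign that the sketch cannot be completed as written: the theorem is genuinely false when $m>2k-2$ (e.g.\ $(2,3)$-agreeable linear societies can have agreement number near $|S|/2$, far below $|S|-1$), so the hypothesis $m\le 2k-2$ must enter essentially into whatever establishes the distinct-blockers claim. In your argument it is used only at the bookkeeping step $m-k+1\le k-1$, while the Hall argument itself never invokes it; that asymmetry means the matching claim, as you propose to prove it, would ``prove too much.'' In short: the skeleton (Helly reduction, blocker counting on the $m$-set $W$, and the sharpness example) is sound, but the central combinatorial lemma is exactly the content of \cref{berg_thm11} and remains unproved here. Either cite that theorem directly --- which reduces your whole argument to two lines --- or supply a genuinely different proof of it; the latter requires more than the K\"onig-duality computation you outline.
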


A product society in which one spectrum is linear and the other is circular is a \textit{cylindrical} society. A product society in which both spectra are circular is a \textit{toroidal} society. Neither cylindrical nor toroidal societies have yet been studied in the literature. Moreover, just as \cref{thm:linear} does not extend to circular societies, \cref{2box} does not extend to cylindrical and toroidal societies.

Furthermore, neither cylindrical nor toroidal societies behave exactly like circular societies. Indeed, we  construct below cylindrical and toroidal societies for which the inequality given in \cref{thm:circular} does not provide a lower bound on agreement. Define a \textit{stratified} society to be a product society whose spectrum is partitioned into disjoint regions called \textit{strata} such that each voter's approval set is completely contained in a single stratum.

\begin{example}\label{ex:stratified}
	Let $S$ be a cylindrical or toroidal stratified society consisting of 20 voters whose approval sets lie in four strata. Further, suppose that each stratum contains five pairwise-intersecting approval sets such that the agreement number in each stratum is 3. In Figure \ref{fig:stratified} we give an example of such a society. By the pigeon-hole principle, $S$ is $(2,5)$-agreeable. If \cref{thm:circular} applied to $S$, then the agreement number would be at least four. However, the agreement number of this society is in fact three. Therefore, an understanding of cylindrical and toroidal societies requires further exploration.
\end{example}

\begin{figure}[htb]
\begin{center}
   	 \begin{tikzpicture}[scale=0.6]
    	\def\a{1}	
        \def\b{2}	
        \def\t{0.5} 
        \def\s{2.5} 
 
        \foreach \i in {1,2,3,4}
            \draw[thick,dashed,gray] (\s*\i, \b) arc(90:270:{\a} and {\b});
        
        \fill[lightgray,fill opacity=0.5] (\s,\b) arc(90:-90:{\a} and {\b}) -- (4*\s,-\b) arc(-90:90:{\a} and {\b}) -- cycle;
        \fill[lightgray,fill opacity=0.3] (\s,\b) arc(90:270:{\a} and {\b}) -- (4*\s,-\b) arc(270:90:{\a} and {\b}) -- cycle;
        
		\draw[thick] (0,0) ellipse ({\a} and \b);
        \foreach \i in {1,2,3,4}
            \draw[thick] (\s*\i,-1*\b) arc(-90:90:{\a} and {\b});
        \draw[thick] (0,-1*\b) -- (10,-1*\b);
        \draw[thick] (0,\b) -- (10,\b);
        
        \draw [decorate,decoration={brace,amplitude=8pt},xshift=0pt,yshift=-6pt]
(5*\t,-\b) -- (0,-\b) node [black,midway,yshift=-14pt] {one strata};
\end{tikzpicture}
\hspace{1cm}
\begin{tikzpicture}
	\draw[thick] (0,0) circle (1cm);
    \draw[blue, very thick] (0:1.15) arc (0:200:1.15cm);
    \draw[green, very thick] (72:1.3) arc (72:272:1.3cm);
    \draw[yellow, very thick] (144:1.45) arc (144:344:1.45cm);
    \draw[orange, very thick] (216:1.6) arc (216:416:1.6cm);
    \draw[red, very thick] (288:1.75) arc (288:488:1.75cm);
\end{tikzpicture}
    \caption{A cylindrical stratified society containing four strata (left). Each stratum contains five pairwise-intersecting approval sets with cross section as shown (right).}
    \label{fig:stratified}
\end{center}
\end{figure}
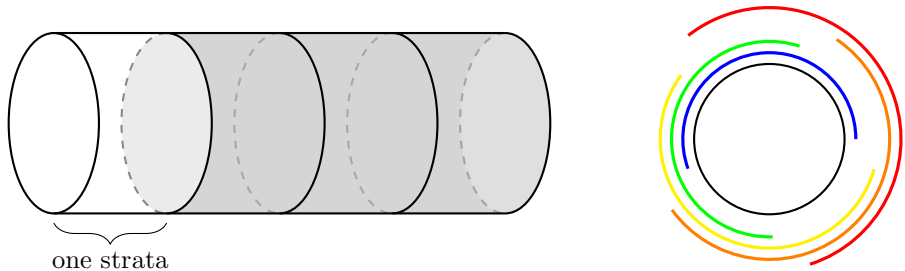

\section{Agreement Numbers in Product Societies.}\label{agreement_nums}

In this section, we provide a lower bound for the agreement number of $(k,m)$-agreeable product societies. We then analyze the strength of the bound. 

\subsection{A bound for the agreement number.} We begin by examining certain projections within a product society.
Let $S = (X\times Y, V, \mathcal{A}^{X \times Y})$ be a product society. Let $S^X$ be the projection of $S$ onto the spectrum $X$ and $S^Y$ the projection onto $Y$; that is, $S^X = (X, V, \mathcal{A}^X)$ and $S^Y = (Y,V, \mathcal{A}^Y)$. Note that $|S^X| = |S^Y| = |S|$. Further, let $p^X$ be a platform with maximal agreement in $S^X$. In other words, $p^X$ is the location in $X$ of the agreement number $a(S^X)$. Define $p^Y$ analogously. Let $\widetilde{S}_{p^X}$ be the subsociety of $S$ that consists of the subset of voters that approve of $p^X$. So, $|\widetilde{S}_{p^X}| = a(S^X)$. Then let $\widetilde{S}^Y_{p^X}$ be the projection of $\widetilde{S}_{p^X}$ onto the spectrum $Y$. We define $\widetilde{S}_{p^Y}^X$ analogously.

\begin{lemma}\label{genlem} Let $S=(X\times Y, V, \A^{X \times Y})$ be a product society. Then
\[ a(S) \ge a(\widetilde{S}^Y_{p^X}) \quad \text{and} \quad a(S) \ge  a(\widetilde{S}^X_{p^Y}). \]
\end{lemma}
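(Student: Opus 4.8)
The plan is to exhibit, in the product spectrum $X \times Y$, a single platform approved by at least $a(\widetilde{S}^Y_{p^X})$ voters of $S$; this immediately yields $a(S) \ge a(\widetilde{S}^Y_{p^X})$, and the second inequality follows by the same argument with the roles of $X$ and $Y$ interchanged.

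First I would unwind the definitions. The society $\widetilde{S}^Y_{p^X}$ lives on the spectrum $Y$; its voter set is $W = \{v \in V : p^X \in A_v^X\}$ (the voters comprising $\widetilde{S}_{p^X}$), and the approval set of each $v \in W$ in this society is $A_v^Y$. Let $q^\ast \in Y$ be a platform of maximal agreement in $\widetilde{S}^Y_{p^X}$, so that the set $W' = \{v \in W : q^\ast \in A_v^Y\}$ has size $a(\widetilde{S}^Y_{p^X})$.

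Next I would check that the platform $(p^X, q^\ast) \in X \times Y$ is approved, in the society $S$, by every voter in $W'$. Indeed, a voter $v \in V$ approves $(p^X, q^\ast)$ precisely when $(p^X, q^\ast) \in A_v^X \times A_v^Y$, that is, when $p^X \in A_v^X$ and $q^\ast \in A_v^Y$; the first condition says $v \in W$, and, given that, the second says $v \in W'$. Hence the set of voters of $S$ approving $(p^X, q^\ast)$ contains $W'$, so $a(S) \ge |W'| = a(\widetilde{S}^Y_{p^X})$. Exchanging $X$ and $Y$ throughout gives $a(S) \ge a(\widetilde{S}^X_{p^Y})$.

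There is essentially no obstacle here beyond bookkeeping: the content is the simple observation that constructing an agreeable platform in a product can be decomposed into first fixing a best platform $p^X$ on one factor and then optimizing on the other factor among the voters who already approve $p^X$. The one point to handle carefully is that $\widetilde{S}^Y_{p^X}$ is a society on the voter set $W$, not on all of $V$, so its agreement number counts only voters approving $p^X$; this is exactly what makes the resulting platform $(p^X,q^\ast)$ genuinely approved by that many voters rather than an overcount.
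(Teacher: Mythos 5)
Your proof is correct and follows essentially the same route as the paper's: restrict to the voters approving $p^X$, find a common point $q^\ast$ in $Y$ for as many of them as possible, and observe that $(p^X,q^\ast)$ is then a commonly approved platform in $X\times Y$. If anything, your version is written more explicitly than the paper's (which phrases the step loosely in terms of intersecting approval sets rather than exhibiting the witnessing platform), but the underlying idea is identical.
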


\begin{proof} First, in order for approval sets in $S$ to overlap, their projections must intersect in both $S^X$ and $S^Y$. Without loss of generality, consider $\widetilde{S}^Y_{p^X}$. This society contains approval sets in $S^Y$ whose corresponding approval sets in $S$ project to intersecting sets in $S^X$. Thus, if approval sets intersect in $\widetilde{S}^Y_{p^X}$, then the corresponding approval sets in $S$ also intersect. Hence, $a(S) \ge a(\widetilde{S}^Y_{p^X})$.
\end{proof}

We are now ready to state our lower bound on the agreement proportion of a $(k,m)$-agreeable product society.
The following theorem relies on the fact that if $S=(X \times Y, V, \A^{X \times Y})$ is a $(k,m)$-agreeable product society, then the projections $S^X$ and $S^Y$ are $(k,m)$-agreeable as well.

\begin{theorem}\label{general_theorem}
Let $S = (X \times Y, V, \A^{X \times Y})$ be a $(k,m)$-agreeable product society. Let $\alpha$ be the lower bound on the agreement proportion of $S^X$ imposed from the fact that $S^X$ is $(k,m)$-agreeable. Let $\beta$ be the analogous lower bound on the agreement proportion of $S^Y$. If $|S| \ge \frac{m}{\min(\alpha,\beta)}$, then 
\[ \frac{a(S)}{|S|} \ge \alpha \beta. \]
\end{theorem}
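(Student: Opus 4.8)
The plan is to bootstrap from \cref{genlem} together with the one-dimensional bounds \cref{thm:linear} and \cref{thm:circular}, applied to the subsociety $\widetilde{S}^Y_{p^X}$. First I would record that $S^X$ and $S^Y$ are themselves $(k,m)$-agreeable: if $m$ voters of $S$ share a platform $(x,y)\in X\times Y$, the same $m$ voters share $x$ in $S^X$ and $y$ in $S^Y$. By the definitions of $\alpha$ and $\beta$ (which are supplied by \cref{thm:linear} or \cref{thm:circular} according to whether each factor is linear or circular), this gives $a(S^X)\ge \alpha|S|$ and $a(S^Y)\ge\beta|S|$. Note also that $|S|\ge m/\min(\alpha,\beta)\ge m$ since a valid lower bound on an agreement proportion is at most $1$.

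Next I would examine $\widetilde{S}^Y_{p^X}$. Its voter set is exactly the set of $a(S^X)$ voters who approve $p^X$, so $|\widetilde{S}^Y_{p^X}|=a(S^X)$, and the hypothesis $|S|\ge m/\min(\alpha,\beta)$ yields $|\widetilde{S}^Y_{p^X}|=a(S^X)\ge\alpha|S|\ge m$. Its approval sets are the $Y$-coordinates of those voters' approval sets in $S$, i.e.\ a subcollection of the approval sets of $S^Y$, hence intervals or arcs of the appropriate kind. Since $S^Y$ is $(k,m)$-agreeable and $\widetilde{S}^Y_{p^X}$ still has at least $m$ voters, every $m$ of its voters form an $m$-subset of $V$ and thus contain $k$ voters with a common $Y$-platform, so $\widetilde{S}^Y_{p^X}$ is again a $(k,m)$-agreeable linear or circular society. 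Applying \cref{thm:linear} or \cref{thm:circular} to it gives $a(\widetilde{S}^Y_{p^X})\ge\beta\,|\widetilde{S}^Y_{p^X}|=\beta\,a(S^X)\ge\alpha\beta|S|$. Finally \cref{genlem} gives $a(S)\ge a(\widetilde{S}^Y_{p^X})\ge\alpha\beta|S|$, which is exactly the claim; running the same computation through $\widetilde{S}^X_{p^Y}$ produces the identical bound, which is why the hypothesis is phrased with $\min(\alpha,\beta)$.

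I expect the one genuinely delicate point — and the reason the size hypothesis appears at all — to be the step asserting that $\widetilde{S}^Y_{p^X}$ inherits $(k,m)$-agreeability from $S^Y$: this transfer is legitimate only when $\widetilde{S}^Y_{p^X}$ still contains at least $m$ voters, since a society with fewer than $m$ voters need not satisfy (nor even admit) the $(k,m)$-agreeability condition, and $\beta$ would then no longer be a valid lower bound for it. Everything else is direct bookkeeping with the definitions together with the already-established \cref{genlem}.
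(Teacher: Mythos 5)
Your proposal is correct and follows essentially the same route as the paper: invoke \cref{genlem}, use the size hypothesis to guarantee $a(S^X)\ge m$ so that $\widetilde{S}^Y_{p^X}$ inherits $(k,m)$-agreeability, and then chain the two one-dimensional lower bounds to get $a(S)\ge\beta\,a(S^X)\ge\alpha\beta|S|$. You also correctly identify the role of the hypothesis $|S|\ge m/\min(\alpha,\beta)$, which is exactly the point the paper's proof uses it for.
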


\begin{proof} Without loss of generality, let $\alpha \le \beta$. By \cref{genlem}, $a(S) \ge a(\widetilde{S}^Y_{p^X})$. Thus, we will put a bound on $a(\widetilde{S}^Y_{p^X})$. The society $S^X$ is at least $(k,m)$-agreeable, so $a(S^X) \ge |S^X|\alpha = |S| \alpha$. Since $|S| \ge \frac{m}{\alpha}$, it follows that $a(S^X) \ge m$. Thus, the society $\widetilde{S}_{p^X}$ contains at least $m$ voters and is still $(k,m)$-agreeable. So $\widetilde{S}^Y_{p^X}$ is also at least $(k,m)$-agreeable, and hence $a(\widetilde{S}^Y_{p^X}) \ge |\widetilde{S}^Y_{p^X}|\beta$. Finally, $|\widetilde{S}^Y_{p^X}| = a(S^X)$, and so $a(\widetilde{S}^Y_{p^X}) \ge |S| \alpha \beta$. Hence, $\frac{a(S)}{|S|} \ge \alpha \beta$.
\end{proof}

If we apply \cref{general_theorem} to 2-box, cylindrical, and toroidal societies, then we obtain the following bounds on agreement.

\begin{theorem}\label{main_cor} Let $S$ be a $(k,m)$-agreeable product society.
\begin{itemize}
\item If $S$ is a 2-box society and $|S| \ge \frac{m(m-1)}{k-1}$, then 
	\[ \frac{a(S)}{|S|} \ge  \frac{(k-1)^2}{(m-1)^2}. \]
\item If $S$ is a cylindrical society and $|S| \ge \frac{m(m-1)}{k-1}$, then
	\[ \frac{a(S)}{|S|} >  \frac{(k-1)^2}{m(m-1)}. \]
\item If $S$ is a toroidal society and $|S| \ge \frac{m^2}{k-1}$, then 
	\[ \frac{a(S)}{|S|} >  \frac{(k-1)^2}{m^2}. \]
\end{itemize}
\end{theorem}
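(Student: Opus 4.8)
The plan is to specialize \cref{general_theorem} to each of the three cases by substituting the appropriate values of $\alpha$ and $\beta$ coming from the known one-dimensional theorems. In each case the two projections $S^X$ and $S^Y$ have the \emph{same} type of spectrum, so $\alpha = \beta$, and the hypothesis $|S| \ge \frac{m}{\min(\alpha,\beta)} = \frac{m}{\alpha}$ becomes exactly the size condition stated. The conclusion $\frac{a(S)}{|S|} \ge \alpha\beta = \alpha^2$ then gives the displayed bound. So each bullet is really just plugging in.

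For the 2-box case, both projections are linear societies, so by \cref{thm:linear} the best lower bound on each agreement proportion is $\alpha = \beta = \frac{k-1}{m-1}$. The size hypothesis $|S| \ge \frac{m}{\alpha} = \frac{m(m-1)}{k-1}$ is exactly as stated, and \cref{general_theorem} yields $\frac{a(S)}{|S|} \ge \left(\frac{k-1}{m-1}\right)^2$. For the cylindrical case, one projection is linear (giving bound $\frac{k-1}{m-1}$) and the other is circular (giving, by \cref{thm:circular}, bound $\frac{k-1}{m}$, which is strictly smaller). Hence $\min(\alpha,\beta) = \frac{k-1}{m}$, so the size hypothesis is $|S| \ge \frac{m}{(k-1)/m} = \frac{m^2}{k-1}$ — but the stated hypothesis is $|S| \ge \frac{m(m-1)}{k-1}$, which is weaker, so I should double-check whether the circular projection's strict inequality lets us get away with the linear bound here or whether this is a minor discrepancy to reconcile with the authors. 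Taking $\alpha = \frac{k-1}{m-1}$ (linear) and $\beta = \frac{k-1}{m}$ (circular) in the product gives $\alpha\beta = \frac{(k-1)^2}{m(m-1)}$, and the inequality is strict because the circular bound of \cref{thm:circular} is strict. For the toroidal case, both projections are circular, so $\alpha = \beta = \frac{k-1}{m}$, the size hypothesis is $|S| \ge \frac{m}{(k-1)/m} = \frac{m^2}{k-1}$ as stated, and we get $\frac{a(S)}{|S|} > \frac{(k-1)^2}{m^2}$, strict again from \cref{thm:circular}.

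The one genuinely non-mechanical point is the propagation of the \emph{strict} inequality through \cref{general_theorem}. The statement of \cref{general_theorem} as written only gives $\frac{a(S)}{|S|} \ge \alpha\beta$, but in the circular and toroidal cases we want strict inequality. The fix is to revisit the proof of \cref{general_theorem}: when the projection in question is circular, \cref{thm:circular} gives $a(S^X) > |S|\beta$ strictly (in fact $a \ge \lfloor \text{(something)} \rfloor + 1$), so the chain of inequalities $a(\widetilde S^Y_{p^X}) \ge |\widetilde S^Y_{p^X}|\beta = a(S^X)\beta > |S|\alpha\beta$ is strict at the appropriate link. I expect this bookkeeping about strictness — and the size-hypothesis discrepancy in the cylindrical bullet — to be the only real obstacle; everything else is direct substitution.

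\begin{proof}
In each case the projections $S^X$ and $S^Y$ are $(k,m)$-agreeable, so \cref{general_theorem} applies once we identify the lower bounds $\alpha$ and $\beta$.

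If $S$ is a 2-box society, both $S^X$ and $S^Y$ are $(k,m)$-agreeable linear societies, so by \cref{thm:linear} we may take $\alpha = \beta = \frac{k-1}{m-1}$. The hypothesis $|S| \ge \frac{m(m-1)}{k-1} = \frac{m}{\min(\alpha,\beta)}$ then gives $\frac{a(S)}{|S|} \ge \alpha\beta = \frac{(k-1)^2}{(m-1)^2}$.

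If $S$ is a cylindrical society, then one projection is a $(k,m)$-agreeable linear society and the other is a $(k,m)$-agreeable circular society; by \cref{thm:linear} and \cref{thm:circular} we take $\alpha = \frac{k-1}{m-1}$ and $\beta = \frac{k-1}{m}$ (so $\min(\alpha,\beta) = \beta$). Examining the proof of \cref{general_theorem} with the circular projection playing the role of $S^Y$: since $\widetilde S_{p^X}$ contains at least $m$ voters and is $(k,m)$-agreeable, \cref{thm:circular} gives $a(\widetilde S^Y_{p^X}) > |\widetilde S^Y_{p^X}|\,\beta = a(S^X)\,\beta \ge |S|\,\alpha\beta$. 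Hence $\frac{a(S)}{|S|} > \frac{(k-1)^2}{m(m-1)}$.

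If $S$ is a toroidal society, both $S^X$ and $S^Y$ are $(k,m)$-agreeable circular societies, so by \cref{thm:circular} we take $\alpha = \beta = \frac{k-1}{m}$, and $\frac{m}{\min(\alpha,\beta)} = \frac{m^2}{k-1}$. Arguing as in the cylindrical case, the circular bound on the projection $\widetilde S^Y_{p^X}$ is strict, so $\frac{a(S)}{|S|} > \alpha\beta = \frac{(k-1)^2}{m^2}$.
\end{proof}
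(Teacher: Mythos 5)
Your proof is correct and takes the same route as the paper: the paper proves only the 2-box bullet by direct substitution into \cref{general_theorem} and declares the cylindrical and toroidal cases ``analogous.'' The two subtleties you flagged are genuine and your resolutions are the right ones --- both the strict inequalities in the circular-factor cases and the weaker-than-expected size hypothesis $|S| \ge \frac{m(m-1)}{k-1}$ in the cylindrical bullet require reopening the \emph{proof} of \cref{general_theorem} rather than citing its statement (projecting first onto the linear spectrum so that only $|S| \ge m/\max(\alpha,\beta)$ is needed, and letting the strict circular bound of \cref{thm:circular} propagate through the last link of the chain), which is exactly what you did.
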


\begin{proof}
Let $S$ be a $(k,m)$-agreeable 2-box society with $|S| \ge \frac{m(m-1)}{k-1}$. Then both $S^X$ and $S^Y$ are  $(k,m)$-agreeable linear societies with agreement proportions  at least $\frac{k-1}{m-1}$. 
Thus, in the notation of \cref{general_theorem}, we have $\alpha = \beta = \frac{k-1}{m-1}$.  Since $\frac{m}{\min(\alpha,\beta)} = \frac{m(m-1)}{k-1}$ and $|S| \ge \frac{m(m-1)}{k-1},$ it follows that $\frac{a(S)}{|S|} \ge \alpha\beta = \frac{(k-1)^2}{(m-1)^2}$.

The proofs for cylindrical and toroidal societies are analogous.
\end{proof}

For example, in a $(3,3)$-agreeable cylindrical society with a sufficiently large number of voters, at least two-thirds of the voters agree on some platform. On the other hand, a toroidal society must be $(6,6)$-agreeable for us to conclude that at least two-thirds of the voters agree on some platform.

Moreover, it may be much easier to analyze voting preferences of small groups instead of trying to understand all preferences at once, in which case the result above is useful. For instance, suppose an organization is trying to determine the length of a conference (in days) and what time of year to hold the conference, and all members of the organization vote for their preferred length and date combination. As mentioned in the Introduction, this is an example of a cylindrical society. The organization wants to choose a length and date combination that works for at least two-thirds of the voting members.  Since it may be difficult to analyze all approval sets at once, the organization takes samples of three voters and determines that in each sample, the three voters agree on a length and date. While it may be impractical to consider all possible samples of three voters, based on the samples collected, the organization conjectures that the society is $(3,3)$-agreeable. Based on the hypothesis of $(3,3)$-agreeability, the organization invokes \cref{main_cor} to conclude that there is a meeting length and date that is acceptable to at least two-thirds of the members.

\subsection{Analyzing bounds.} We now discuss the strength of the result from \cref{main_cor} by comparing to previous results of Berg, et. al. and considering examples in which the bound is sharp.

We first consider $(k,m)$-agreeable $2$-box societies.  \cref{2box} gives a stronger bound for $2$-box societies than  \cref{main_cor}. However, \cref{2box} only holds under strict assumptions about $k$ and $m$ since it requires $k\leq m \leq 2k-2$. The result from \cref{main_cor} has no restrictions on $k$ or $m$ and thus we may apply it to more $2$-box societies. While an assumption must be made about the size of the society in order to apply \cref{main_cor}, it only becomes very restrictive when $m$ is significantly larger than $k$, in which case $a(S)$ must be small anyway.

Cylindrical and toroidal societies will be discussed in greater detail in the following two sections but first, we provide examples demonstrating that the result from \cref{main_cor} is sharp in certain cases.

\begin{example}\label{cylinder_ex}
Let $S$ be the $(2,2)$-agreeable cylindrical society consisting of $3$ approval sets that pairwise intersect around the cylinder in such a way that not all three intersect at any point. A cross section of this society is shown in \cref{fig:circular}.
Then $|S| = 3$ which is greater than $\frac{2(2-1)}{2-1}$, so we may apply \cref{main_cor}. The result is that $a(S) > \frac{1}{2}|S|$, giving a lower bound of $2$ on the agreement number, which is the exact value of $a(S)$.
\end{example}

\begin{example}
Let $S$ be the $(2,2)$-agreeable toroidal society with $5$ voters shown in \cref{fig:torus5}. To construct the torus, we identify the sides as indicated. Since $|S| > \frac{2^2}{2-1}$, \cref{main_cor} gives $a(S) > \frac{5}{4}$. Thus, the result gives a lower bound of $2$ which is precisely the agreement number of the society.
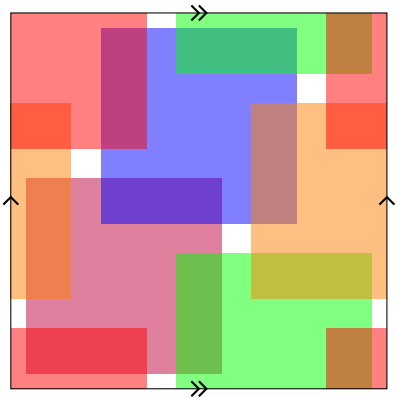
\begin{figure}[h]
\begin{center}
	\begin{tikzpicture}
    \fill[purple,opacity=0.5] (0.2,0.2) rectangle (2.8,2.8);
    \fill[blue,opacity=0.5] (1.2,2.2) rectangle (3.8,4.8);
    \fill[green,opacity=0.5] (2.2,0) rectangle (4.8,1.8);
    \fill[green,opacity=0.5] (2.2,4.2) rectangle (4.8,5);
    \fill[orange,opacity=0.5] (3.2,1.2) rectangle (5,3.8);
    \fill[orange,opacity=0.5] (0,1.2) rectangle (0.8,3.8);
    \fill[red,opacity=0.5] (4.2,3.2) rectangle (5,5);
    \fill[red,opacity=0.5] (0,0) rectangle (1.8,0.8);
    \fill[red,opacity=0.5] (0,3.2) rectangle (1.8,5);
    \fill[red,opacity=0.5] (4.2,0) rectangle (5,0.8);
   	
    \draw (0,0) rectangle (5,5);
    \draw[thick] (2.4,0.1) -- (2.5,0) -- (2.4,-0.1);
    \draw[thick] (2.5,0.1) -- (2.6,0) -- (2.5,-0.1);
    \draw[thick] (2.4,5.1) -- (2.5,5) -- (2.4,4.9);
    \draw[thick] (2.5,5.1) -- (2.6,5) -- (2.5,4.9);
    \draw[thick] (-0.1,2.45) --(0,2.55) -- (0.1,2.45);
    \draw[thick] (4.9,2.45) -- (5,2.55) -- (5.1,2.45);
    \end{tikzpicture}
\end{center}
\caption{A $(2,2)$-agreeable toroidal society with $5$ voters and agreement number $2$.}
\label{fig:torus5}
\end{figure}

\end{example}

\section{Cylindrical Societies.}\label{sec:cylindrical}

Before presenting our main theorem for this section, which gives a lower bound on the agreement number of a $(k,m)$-agreeable cylindrical society, we first revisit the agreement number of a $(k,m)$-agreeable circular society. \cref{thm:circular} gives Hardin's lower bound for the agreement of a $(k,m)$-agreeable circular society in terms of the size of the society. His method for proving this bound relies on transforming a given circular society to one of a particular form called a {\it uniform society}. We take a different approach involving the agreement graph of a society. 

Recall that a graph $G$ consists of a collection of vertices $V(G)$ and edges $E(G)$ that join pairs of vertices. The {\it agreement graph} $G$ of a society $S$ is constructed by letting $V(G)$ consist of all voters in the society and forming edges between vertices when the corresponding voters have intersecting approval sets. The results in this section rely on the clique number of agreement graphs. A {\it clique} in a graph is a collection of vertices such that each pair of vertices share an edge. The maximum clique of a graph $G$ is a clique with the largest number of vertices. The {\it clique number} of $G$, denoted $\omega(G)$, is the number of vertices in a maximum clique of $G$.
In an agreement graph, the clique number gives the maximal number of voters that are pairwise-agreeable.

\subsection{Revisiting circular societies.}

For a linear society, the clique number of the agreement graph is equal to the agreement number of the society \cite[Fact 1]{Berg}. The same is not true for circular societies. In fact, the clique number can be larger than the agreement number---consider a circle with three arcs  that intersect pairwise, but with no point in common to all three. We can, however, give a lower bound on the agreement number of a circular society in terms of the clique number of its agreement graph. 

\begin{theorem}\label{circ_clique}
	Let $S$ be a circular society, and let $G$ be the agreement graph of $S$. Then,
    \begin{equation*}
    	a(S) \ge \frac{\omega(G) + 1}{2}.
    \end{equation*}
\end{theorem}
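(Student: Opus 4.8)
The plan is to work directly with a maximum clique in the agreement graph $G$ and extract from it a point covered by at least half (rounded up) of its vertices. Let $C$ be a maximum clique, so $|C| = \omega(G)$, and consider the subsociety $S_C$ consisting of just these $\omega(G)$ voters. Every pair of arcs in $S_C$ intersects on the circle. First I would invoke a Helly-type fact for arcs of a circle: a family of pairwise-intersecting arcs either has a common point, or else — after removing the possibility that some arc is the whole circle — the arcs can be understood via their complements, which are arcs that are pairwise "non-covering" in a controlled way. The cleanest route is the classical observation that if arcs pairwise intersect but have empty total intersection, then the complements of the arcs cover the circle, and one can then count.

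The key steps, in order, would be: (1) Reduce to the clique $C$ and its $\omega(G)$ pairwise-intersecting arcs. (2) If these arcs have a common point, then that point is approved by all $\omega(G) \ge \frac{\omega(G)+1}{2}$ voters and we are done. (3) Otherwise, pass to the complementary arcs $B_v = S^1 \setminus A_v$; pairwise intersection of the $A_v$ together with empty common intersection forces the $B_v$ to cover $S^1$ (this is where the circular geometry enters, analogous to the three-arc example in the paragraph preceding the theorem). (4) Pick a point $x \in S^1$ lying in the fewest of the $B_v$; a counting/averaging argument over a minimal subcover, or a direct combinatorial argument about how few complements can meet at a point when the originals pairwise intersect, shows $x$ lies in at most $\lfloor \omega(G)/2 \rfloor$ of the complements, hence $x$ is approved by at least $\omega(G) - \lfloor \omega(G)/2 \rfloor = \lceil \omega(G)/2 \rceil = \frac{\omega(G)+1}{2}$ voters when $\omega(G)$ is odd (and $\ge \omega(G)/2 \ge \frac{\omega(G)}{2}$ when even, which still dominates $\frac{\omega(G)+1}{2}$ only after the floor—so I would state the bound as $a(S) \ge \lceil \omega(G)/2 \rceil$ and note $\lceil \omega(G)/2\rceil \ge \frac{\omega(G)+1}{2}$ fails for even $\omega(G)$; the correct reading is that the theorem's right-hand side is $\frac{\omega(G)+1}{2}$, which is $\le \lceil \omega(G)/2 \rceil$ in all cases). (5) Since $S_C \subseteq S$, any platform approved by $t$ voters of $S_C$ is approved by $t$ voters of $S$, giving $a(S) \ge \frac{\omega(G)+1}{2}$.

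The main obstacle I anticipate is step (4): making precise exactly how few complementary arcs can share a common point given that the original arcs are pairwise intersecting. The heart of the matter is that an endpoint-based argument is needed — one orders the $2\omega(G)$ endpoints of the arcs around the circle and argues that pairwise intersection prevents two complements from being "nested" in a way that would let many of them pile up on one point. I would probably formalize this by choosing $x$ immediately clockwise of some arc endpoint and pairing up the arcs whose complements contain $x$: each such complement "starts" before $x$ and "ends" after $x$, and pairwise intersection of the corresponding originals limits how many such arcs can simultaneously straddle $x$ from the outside, yielding the factor of $2$. An alternative and perhaps slicker approach is to induct or to use the known structure theorem for circular-arc families; but since the paper emphasizes an elementary agreement-graph approach, I would keep the argument self-contained using only endpoint bookkeeping on the circle.
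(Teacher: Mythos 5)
There is a genuine gap here, and in fact two related problems. First, your step (4) --- the claim that some point of the circle lies in at most $\lfloor \omega(G)/2 \rfloor$ of the complementary arcs --- is precisely the combinatorial heart of the theorem, and your proposal does not prove it: covering the circle by the complements only forces the \emph{average} multiplicity to be at least $1$, which says nothing about an upper bound on the minimum multiplicity, and the ``endpoint bookkeeping'' that is supposed to supply the factor of $2$ is left as an intention rather than an argument (you flag it yourself as the main obstacle). Second, and more decisively, even if step (4) were carried out exactly as stated, the resulting bound $a(S) \ge \omega(G) - \lfloor \omega(G)/2 \rfloor = \lceil \omega(G)/2 \rceil$ is strictly weaker than the theorem when $\omega(G)$ is even: for $\omega(G)=4$ the theorem asserts $a(S) \ge 5/2$, hence $a(S) \ge 3$ since $a(S)$ is an integer, whereas $\lceil 4/2 \rceil = 2$. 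Your parenthetical claim that $\frac{\omega(G)+1}{2} \le \lceil \omega(G)/2 \rceil$ ``in all cases'' is false for even $\omega(G)$, so the proposal cannot close to the stated inequality even granting its hardest step. To salvage the complementary-arc route you would need the stronger statement that some point lies in at most $\lceil \omega(G)/2 \rceil - 1$ of the complements, i.e.\ in at least $\lfloor \omega(G)/2 \rfloor + 1$ of the original arcs, and that requires a real argument.

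For comparison, the paper takes a different route that delivers exactly this stronger count. It restricts to the subsociety $\bar{S}$ of a maximum clique, cuts the circle at a platform $p$ realizing $a(\bar{S})=a$, and unrolls to a \emph{linear} society $S_L$ with $|\bar{S}|+a$ intervals (each of the $a$ arcs through $p$ splits into two pieces). Because clique number equals agreement number for linear societies, any clique of size $a+1$ in the agreement graph of $S_L$ would contradict $a(S_L)=a$; ordering the pieces through $p$ by length, one shows every uncut interval must meet the longest such piece, so there are fewer than $a$ uncut intervals. This gives $|\bar{S}|-a < a$, i.e.\ $\omega(G) \le 2a-1$, which is the integer-sharp form of $a(S) \ge \frac{\omega(G)+1}{2}$. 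Your step (5), passing from the clique subsociety back to $S$, is fine; it is the interior counting argument that is missing and, as targeted, too weak.
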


\begin{proof} For any circular society $S$ the agreement graph $G$ has some maximal clique $G_M$ of size $\omega(G)$. The voters corresponding to the vertices of this graph form a pairwise-agreeable sub-society $\bar{S} \subseteq S$, and so $\bar{S}$ is $(2,2)$-agreeable. Moreover, since $a(S) \ge a(\bar{S})$, it suffices to show that $$a(\bar{S}) \ge \frac{\omega(G)+1}{2}.$$ Since $|\bar{S}| = \omega(G)$, we will show that $|\bar{S}| \le 2a(\bar{S}) -1$.

We will use a construction that transforms the circular society $\bar{S}$ into a linear society via a process of cutting and ``unrolling." Let $p$ be a point (i.e., a representative platform) in the intersection of $a(\bar{S})=a$ approval sets. We can cut $\bar{S}$ at the point $p$ and ``unroll" to form a linear spectrum. In doing so, we  keep a copy of $p$, call it $p'$, at the other end of the newly-constructed linear society and consider the two pieces resulting from cutting the $a$ intersecting approval sets as unique sets.
In this way, we have created a linear society $S_L$ with $|\bar{S}| + a$ approval sets such that $a(S_L) = a(\bar{S})$. See \cref{circular to linear} for an example.

\begin{figure}[h]
\begin{tikzpicture}[scale=.9, transform shape]
	\draw[thick] (0,0) circle (1cm);
    \draw[blue, very thick] (1.75,0) arc (0:150:1.75cm);
    \draw[orange, very thick] (0,-1.3) arc (-90:60:1.3cm);
    \draw[green, very thick] (-.725,1.256) arc (-240:-20:1.45cm);
    \draw[yellow, very thick] (-.547,-1.5) arc (-110: 30: 1.6cm);
    \draw[red, very thick] (0, 1.15) arc (90: -55: 1.15cm);
    \draw[dashed] (.6,.6) -- (1.4,1.4);
    \node at (.45,.45) {$p$};
    \node at (0, -1.85) {$\bar{S}$};
\end{tikzpicture}
\hspace{.9cm}
\begin{tikzpicture}[scale=1.2]
    \draw[thick,<->] (0,0) -- (7,0);
    \draw[dashed] (0.5,-.1) -- (0.5,.9);
    \draw[dashed] (6.5,-.1) -- (6.5,.9);
    \draw[blue, very thick](.5,.75) -- (1.3,.75);
    \draw[orange,very thick](.5,.3) -- (2.5,.3);
    \draw[red, very thick] (0.5,0.15) -- (2,0.15);
    \draw[green,very thick] (1.7,.45) -- (5,.45);
   \draw[red, very thick](5.5,.15) -- (6.5,.15);
   \draw[yellow, very thick](1,.6) -- (3, .6);
   \draw[blue, very thick](4.5, .75) -- (6.5,.75);
   \draw[orange,very thick] (6, .3) -- (6.5,.3);
   \node at (.5, -.25) {$p$};
   \node at (6.5, -.25) {$p'$};
   \node at (3.5, -.5) {$S_L$};
\end{tikzpicture}
\caption{We cut the spectrum of a circular society $\bar{S}$ at a point $p$ to transform $\bar{S}$ into a linear society $S_L$ with $|S_L| = |\bar{S}| + a(\bar{S})$.}
\label{circular to linear}
\end{figure}
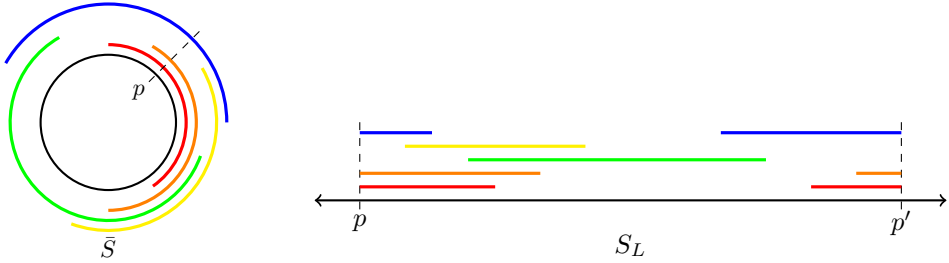

The agreement graph $G_L$ of $S_L$ has $|\bar{S}|+a$ vertices, which we label
\[ v_1, v_2, \dots, v_a, v_1', v_2', \dots, v_a', w_1, w_2, \dots, w_{|\bar{S}|-a}, \]
where the $v_i$ and $v_i'$ are the vertices corresponding to the two pieces of each of the $a$ intersecting approval sets that we cut in the construction of $S_L$, and the $w_j$ correspond to the remaining $|\bar{S}|-a$ approval sets. 
More precisely, the vertices $v_i$ correspond to the approval sets in $S_L$ containing $p$, $v_i'$ correspond to the sets containing $p'$, and $w_j$ correspond to sets that do not contain $p$ or $p'$.

Since the approval sets associated to the collection $v_1, \dots, v_a$ are all intersecting, we know that the collection of these vertices in $G_L$ form a clique of size $a$. Likewise, the vertices $v_1', \dots, v_a'$ also form a clique of size $a$. Additionally, since $\bar{S}$ is $(2,2)$-agreeable, the approval sets corresponding to the vertices $w_1, \dots w_{|\bar{S}|-a}$ are all pairwise intersecting, so the collection of $w_j$ forms a clique of size $|\bar{S}|-a$ in $G_L$.

To obtain the graph $G_M$ from the graph $G_L$, we collapse each pair of vertices $v_i$ and $v_i'$ to a single vertex, and if any $w_j$ has an edge to both $v_i$ and $v_i'$, we remove one of those edges. Since $G_M$ is the complete graph on $|\bar{S}|$ vertices, we know that each $w_j$ must share an edge with either $v_i$ or $v_i'$ for each $i$.

Without loss of generality, we can order the $v_i$ so that $v_1$ corresponds to the shortest of the approval sets containing $p$ and $v_a$ corresponds to the longest. Thus, if any $w_j$ shares an edge with a fixed $v_k$, it must also share an edge with every $v_i$ for $k \leq i \leq a$. 

In particular, each $w_j$ must either share an edge with $v_a$ or $v_i'$ for all $i$. But, if the latter case were true for some $w_j$, then $w_j \cup \{v_i'\}_{i=1}^{a}$ would form a clique of size $a+1$, and for a linear society, the clique number of the agreement graph equals the agreement number of the society \cite[Fact 1]{Berg}. Thus, a clique of size $a+1$ in $G_L$ would imply that $a(S_L) \ge a+1$, which contradicts the fact that $a(S_L) = a(\bar{S}) = a$.

Therefore, every $w_j$ must share an edge with $v_a$. This implies that the number of $w_j$ must be less than $a$, for otherwise we would again have a clique of size $a+1$. Hence, $|\bar{S}|-a < a$, or equivalently, $|\bar{S}| \leq 2a -1$. Thus, $\omega(G) \ge 2a(\bar{S}) -1$, and it follows that $a(S) \ge (\omega(G)+1)/2$.

\end{proof}

\subsection{A new bound for cylindrical societies.}

We now turn to cylindrical societies. We will make use of the following theorem from Berg, et. al.\ \cite[Theorem 11]{Berg} regarding the clique size of a graph.

\begin{theorem}\label{berg_thm11}
Let $m,k \geq 2$ be integers with $k\leq m \leq 2k-2$, and let $G$ be a graph on
$n \geq m$ vertices such that every subset of $m$ vertices includes a clique of size $k$.
Then $\omega(G)\geq n-m+k$.
\end{theorem}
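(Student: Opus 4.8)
The plan is to argue by induction on $n$, isolating the real difficulty in a single extremal configuration that is then analysed through a maximum clique and an auxiliary bipartite graph. The base case $n=m$ is immediate: the whole vertex set is an $m$-set, so $\omega(G)\ge k=n-m+k$. For $n>m$, deleting any vertex and applying the inductive hypothesis gives $\omega(G)\ge n-m+k-1$, so it suffices to rule out equality. Two quick consequences of the hypothesis are worth recording first. (a) $\alpha(G)\le m-k+1$: otherwise an independent set of $m-k+2$ vertices together with any $k-2$ further vertices would be an $m$-set in which every clique has at most $1+(k-2)=k-1$ vertices, contradicting that every $m$-set contains a $K_k$. (b) $\bar G$ has matching number at most $m-k$: a matching of size $m-k+1$ in $\bar G$ spans $2(m-k+1)\le m$ vertices — this is where $m\le 2k-2$ enters — and these extend to an $m$-set on which $\bar G$ has independence number at most $k-1$, i.e.\ an $m$-set of $G$ with no $K_k$.

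Now suppose, for contradiction, that $\omega(G)=t:=n-m+k-1$ (so $t\ge k$), fix a maximum clique $C$, and set $W:=V\setminus C$, so $|W|=m-k+1\le k-1$. For a clique $B\subseteq W$ write $N^+(B)$ for its set of common $G$-neighbours in $C$ and $D_B:=C\setminus N^+(B)=\bigcup_{w\in B}\bigl(C\setminus N_G(w)\bigr)$. Since $C$ is maximal, $B\cup N^+(B)$ is a clique, hence $|B|+|N^+(B)|\le t$; equivalently $|D_B|\ge|B|$ for every clique $B\subseteq W$ (in particular each $w\in W$ has a non-neighbour in $C$). The crux is to produce a $(k-1)$-element set $C'\subseteq C$ with $|C'\cap D_B|\ge|B|$ for every clique $B\subseteq W$. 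Granting this, $S:=W\cup C'$ is an $m$-set with no $K_k$: a copy $K\subseteq S$ of $K_k$ would split as $A\cup B$ with $\varnothing\ne A\subseteq C'$ and $\varnothing\ne B\subseteq W$ (since $|C'|,|W|<k$) and $A\subseteq N^+(B)$, forcing $|C'\cap D_B|=|C'|-|C'\cap N^+(B)|\le (k-1)-|A|=|B|-1$, contradicting the choice of $C'$. So $S$ contradicts the hypothesis, completing the induction.

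To build $C'$, form the bipartite ``non-adjacency'' graph $\Gamma$ on parts $W$ and $C$, with $wc\in E(\Gamma)$ exactly when $wc\notin E(G)$, and split according to whether $\Gamma$ has a matching saturating $W$. If it does, take $C'$ to be the matched vertices of $C$, padded by $k-1-|W|\ge 0$ arbitrary further vertices of $C$ (available since $|C|=t\ge k$); then for every clique $B\subseteq W$ the $|B|$ matched partners of $B$ are distinct vertices of $C'$ lying in $D_B$, exactly as needed. This case is clean. If $\Gamma$ has no $W$-saturating matching, Hall's theorem gives a non-empty $W_0\subseteq W$ with $|D_{W_0}|<|W_0|$; since every clique $B$ has $|D_B|\ge|B|$, this $W_0$ is \emph{not} a clique, so $N^+(W_0)$ is a clique of $G$ of size $>|C|-|W_0|$ completely joined to the non-clique $W_0$.

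The main obstacle is precisely this last case: converting the non-clique $W_0$ together with the large clique $N^+(W_0)$ attached to it into either a usable $C'$ or a direct contradiction. I would attack it by a secondary induction on $|W|$: peel off a vertex $w_0\in W$, obtain by induction a valid $C'$ for $W\setminus\{w_0\}$, and enlarge it by at most one vertex of $D_{w_0}$ to repair the cliques through $w_0$, with the cliques $B$ for which $D_{w_0}\subseteq D_B$ being the ones whose bookkeeping needs care; facts (a) and (b), and a further application of the hypothesis to $m$-sets meeting $W_0$ and $N^+(W_0)$ (or swapping $C$ for the maximum clique $N^+(W_0)\cup\{w\}$), should feed in here. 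It is exactly at this step that the hypothesis $m\le 2k-2$ is indispensable — the disjoint-union-of-cliques examples with $m=2k-1$ show the conclusion fails without it.
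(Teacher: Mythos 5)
A preliminary note: the paper does not actually prove this statement---it is imported verbatim as \cite[Theorem 11]{Berg} and used as a black box---so there is no in-paper proof to compare against, and your argument has to stand entirely on its own. Most of what you write does stand: the base case, the reduction to excluding $\omega(G)=n-m+k-1$, facts (a) and (b) (with the hypothesis $m\le 2k-2$ correctly isolated in (b)), the count $|W|=m-k+1\le k-1$, the inequality $|D_B|\ge |B|$ for cliques $B\subseteq W$, the verification that a $(k-1)$-set $C'\subseteq C$ with $|C'\cap D_B|\ge |B|$ for every clique $B\subseteq W$ produces a forbidden $m$-set $W\cup C'$, and the case where the non-adjacency graph $\Gamma$ has a $W$-saturating matching are all correct and nontrivial.

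However, the proof is not complete, and the gap is exactly where you say it is. The entire argument funnels into the deficient case of Hall's theorem, and there you give only a plan (``I would attack it by\ldots,'' ``should feed in here''), not an argument. This is not a cosmetic omission: it is the crux. Concretely, the proposed repair---a secondary induction on $|W|$ in which you build $C'$ for $W\setminus\{w_0\}$ and then adjoin one vertex of $D_{\{w_0\}}$---fails precisely in the situation you flag: if $D_{\{w_0\}}$ is small and already contained in the inductively constructed $C'$, no new vertex can be added, and for a clique $B\ni w_0$ whose other members already account for all of $D_{\{w_0\}}$ (i.e.\ $D_{\{w_0\}}\subseteq D_{B\setminus\{w_0\}}$) the set $C'\cap D_B$ gains nothing, so $|C'\cap D_B|\ge |B|$ is not secured. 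Nothing in the sketch shows how facts (a), (b), or the large clique $N^+(W_0)$ attached to the non-clique $W_0$ close this case. Note also that since the standing assumption $\omega(G)=n-m+k-1$ is (by the theorem) ultimately contradictory, the intermediate claim ``a valid $C'$ always exists under this assumption'' cannot be sanity-checked on examples; it must be derived, and it is not. Until the deficient-Hall case is genuinely closed---either by constructing $C'$ there as well, or by extracting a different forbidden $m$-set directly from $W_0$ and $N^+(W_0)$---what you have is a well-organized reduction of the theorem to an unproven combinatorial lemma, not a proof.
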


For the same reason as in the circular society case, the clique number of the agreement graph of a cylindrical society is not the same as the agreement number of the society. Thus, the theorem above is not enough to immediately give a result regarding the agreement number of a cylindrical society. However, we may use Berg's result in conjunction with \cref{circ_clique} to prove the following result. It is the cylindrical society version of \cref{2box}, Berg's result for the agreement number of $2$-box societies.

\begin{theorem}\label{cylinder_bound} 
	Let $k$ and $m$ be integers such that $2 \le k \le m \le 2k-2$. If $S $ is a $(k,m)$-agreeable cylindrical society, then
	\begin{equation*}
    	a(S) \geq \frac{|S|-m+k+1}{2}.
    \end{equation*}
\end{theorem}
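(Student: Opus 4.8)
The plan is to combine the clique-counting result of Berg et al.\ (\cref{berg_thm11}) with our circular clique bound (\cref{circ_clique}) and the projection lemma (\cref{genlem}). First I would form the agreement graph $G$ of $S$. Since $S$ is $(k,m)$-agreeable, any $m$ voters contain $k$ voters who share a common platform, and those $k$ voters are pairwise-agreeable, hence span a clique of size $k$ in $G$; thus every $m$-element subset of $V(G)$ contains a clique of size $k$. The definition of $(k,m)$-agreeability forces $|S|\ge m$, and we are assuming $k\le m\le 2k-2$, so \cref{berg_thm11} applies and gives $\omega(G)\ge |S|-m+k$.

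Next I would let $\bar S$ be the sub-society consisting of the voters in a maximum clique of $G$, so that $|\bar S|=\omega(G)\ge |S|-m+k$ and $\bar S$ is pairwise-agreeable. Since $a(S)\ge a(\bar S)$, it suffices to prove $a(\bar S)\ge (|\bar S|+1)/2$. Write the cylinder as $X\times Y$ with $X$ the linear factor and $Y$ the circular factor. The projection $\bar S^X$ is a linear society whose approval sets pairwise intersect (projections of intersecting sets intersect), so by \cite[Fact 1]{Berg}—equivalently, the one-dimensional Helly theorem—these intervals have a common point $p^X$; hence $a(\bar S^X)=|\bar S|$ and $\widetilde S_{p^X}=\bar S$ in the notation preceding \cref{genlem}. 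Then \cref{genlem} gives $a(\bar S)\ge a(\widetilde S^Y_{p^X})=a(\bar S^Y)$, where $\bar S^Y$ is the projection of $\bar S$ onto the circular spectrum $Y$.

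Finally, $\bar S^Y$ is a pairwise-agreeable circular society, so its agreement graph is the complete graph on $|\bar S|$ vertices and therefore has clique number $|\bar S|$; \cref{circ_clique} then yields $a(\bar S^Y)\ge (|\bar S|+1)/2$. Chaining everything together,
\[ a(S)\ge a(\bar S)\ge a(\bar S^Y)\ge \frac{|\bar S|+1}{2}\ge \frac{|S|-m+k+1}{2}, \]
which is the claimed bound.

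I expect the subtle step to be the reduction in the second paragraph. The maximum clique of the agreement graph of the full cylindrical society need not consist of voters with a common point on the cylinder—this is precisely why the clique number and the agreement number differ here—but after projecting onto the linear coordinate the clique collapses to a single common point, and this is what lets us apply \cref{genlem} to the entire clique and then hand the circular coordinate to \cref{circ_clique}. I would also check the degenerate cases: that $|S|-m+k\ge 1$ so that $\bar S$ is nonempty (which holds since $k\ge 2$ and $|S|\ge m$), and that \cref{circ_clique} is valid for the small societies $\bar S^Y$ that may arise.
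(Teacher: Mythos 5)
Your proposal is correct and follows essentially the same route as the paper's proof: apply \cref{berg_thm11} to extract a pairwise-agreeable sub-society of size $|S|-m+k$, use Helly's theorem on the linear coordinate to find a common point $p^X$, and then bound the agreement of the resulting pairwise-agreeable circular society via \cref{circ_clique}. The only cosmetic difference is that the paper realizes that circular society as the slice $S_C=(\pi^{-1}(p^X),V_C,\A_C)$ of the cylinder, whereas you obtain it as the projection $\bar S^Y$ via \cref{genlem}; for product approval sets these are the same reduction.
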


\begin{proof}
	 Let $S = (X\times Y, V, \A^{X \times Y})$, and let $G$ be the agreement graph of $S$; by \cref{berg_thm11} there exists a clique $G'$ of size $|S|-m+k$ in $G$.
    Let $\A' \subseteq \A^{X \times Y}$ be the collection of approval sets that correspond to the vertices in $G'$. That is, $\A'$ consists of $|S|-m+k$ pairwise-intersecting approval sets in $X \times Y$.
    
    Since $S$ is a cylindrical society, without loss of generality, let $X$ be a linear spectrum and $Y$ be a circular spectrum. Let $\pi : X \times Y \to X$ be the canonical projection.
    Then the set $\{ \pi(A) \mid A \in \A' \}$ is a set of $|S|-m+k$ pairwise-intersecting intervals in $X$.
    By Helly's Theorem, the intersection
    \[ \bigcap_{A \in \A'} \pi(A) \]
    is nonempty, so there exists some point $x$ in this intersection.
    Then $C=\pi^{-1}(x)$ is a circular spectrum in $X \times Y$ that intersects every $A \in \A'$. 
    
	Let $V_C \subseteq V$ be the subset of voters whose approval sets intersect $C$, and let $\A_C$ be the corresponding collection of intersections of their approval sets with $C$. Then $S_C=(C,V_C,\A_C)$ is a circular society that is the ``slice'' of $S$ along $C$. By construction of $S_C$, its agreement graph has a clique of size $|S|-m+k$. By \cref{circ_clique}, $a(S_C) \geq \frac{|S|-m+k+1}{2}$. Since $a(S) \ge a(S_C)$, the proof is complete.
\end{proof}

As an application of \cref{cylinder_bound}, consider the scouts mentioned in the Introduction that are lost in the woods. They must decide on a compass direction to travel and how long to travel in a given direction. This cylindrical society does not want to move in any direction unless the majority agree on a direction and travel time. With their knowledge of \cref{cylinder_bound}, they know that if among any $4$ of them, at least $3$ agree on a direction and time, then there will be a direction and time combination that is acceptable to the majority of the group. Conjecturing $(3,4)$-agreeability, they find this result reassuring, even though the \emph{existence} of agreement does not necessarily help them get out of the woods. Note that if there are at least $6$ scouts and they form a $(3,4)$-agreeable society, then both \cref{main_cor} and \cref{cylinder_bound} apply in this situation, but \cref{cylinder_bound} gives a stronger result.

\subsection{Comparing results.} We now compare the bound for the agreement number of cylindrical societies from \cref{cylinder_bound} to the bound for cylindrical societies from \cref{main_cor}. We then provide examples demonstrating that the bound from \cref{cylinder_bound} is sharp in some cases.

We first note that  \cref{main_cor} and \cref{cylinder_bound} hold under different assumptions. \cref{main_cor} holds only for sufficiently large $(k,m)$-agreeable societies ($|S| > \frac{m(m-1)}{k(k-1)}$), while \cref{cylinder_bound} holds when $k$ and $m$ are close ($k \leq m \leq 2k-2$). Thus, each result's strength depends on the size of societies we wish to study or on how agreeable they are. In order to compare the two results we now only consider cylindrical societies that satisfy both of the aforementioned assumptions. We now consider two examples which correspond to the extreme cases from the bound $k \leq m \leq 2k-2$.

\begin{example}
Let $S$ be a $(k,k)$-agreeable cylindrical society. Applying the bound from \cref{main_cor} yields 
\[ a(S) > \frac{k-1}{k}|S|, \]
while the result from \cref{cylinder_bound} yields
\[ a(S) \geq \frac{|S|+1}{2}. \]
\end{example}

Hence, whenever $k \geq 3$, \cref{main_cor} is a stronger result in the case above. However, as we will see in the example below, when $m$ is large compared to $k$,  \cref{cylinder_bound} will sometimes give a better bound. 

\begin{example}
Let $S$ be a $(k,2k-2)$-agreeable society with $|S| > 4$. Thus, we may apply \cref{main_cor} to obtain
\[a(S) > \frac{k-1}{2(2k-3)}|S|.\]
On the other hand, applying \cref{cylinder_bound} gives
\[a(S) \geq \frac{|S| - k + 3}{2}.\]
\end{example}

For small values of $k$ the bound from \cref{cylinder_bound} is stronger in the example above, but as the value of $k$  increases, the bound for $a(S)$ from \cref{main_cor} approaches $\frac{|S|}{4}$. In particular, the bound from \cref{cylinder_bound} is still stronger unless $k > |S|/2$. However, in the case $k > |S|/2$, we would have $m > |S|-2$ and thus would already know almost entirely the agreement number of the society.

Finally, the bound from \cref{cylinder_bound} is sharp for the society in \cref{cylinder_ex}. Since the society is $(2,2)$-agreeable, \cref{cylinder_bound} holds and gives  $a(S) \geq 2$.

\section{Toroidal Societies.}\label{sec:toroidal}

We conclude with another look at toroidal societies. We first give a result analogous to Theorem \ref{cylinder_bound} that holds regardless of $|S|$, unlike \cref{main_cor}. We then show that the toroidal society bounds in Theorems \ref{main_cor} and \ref{tor_bound} are sharp for a specific family of toroidal societies.

\begin{theorem}\label{tor_bound}
Let $k$ and $m$ be integers such that $2 \le k \le m \le 2k-2$. If $S = (X\times Y, V, \A^{X \times Y})$ is a $(k,m)$-agreeable toroidal society, then
	\begin{equation*}
    	a(S) \geq \frac{|S|-m+k+2}{4}.
    \end{equation*}
\end{theorem}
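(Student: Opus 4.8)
The plan is to adapt the proof of \cref{cylinder_bound} almost verbatim, with a single substitution: in the cylindrical case the linear direction was handled by projecting and invoking Helly's Theorem to pin down one point common to all projected intervals, but Helly fails for arcs on a circle, so in the toroidal case I would instead apply \cref{circ_clique} a \emph{second} time, to the circular projection. Composing the two factor-of-$\tfrac12$ losses from the two uses of \cref{circ_clique} is exactly what turns the denominator $2$ of \cref{cylinder_bound} into $4$.

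First I would let $G$ be the agreement graph of $S=(X\times Y,V,\A^{X\times Y})$. Since $S$ is $(k,m)$-agreeable, any $m$ voters contain $k$ with a common platform, hence $k$ whose approval sets pairwise intersect; so every $m$-subset of $V(G)$ contains a clique of size $k$, and \cref{berg_thm11} furnishes a clique $G'\subseteq G$ of size $|S|-m+k$. Let $\A'$ be the corresponding family of $|S|-m+k$ pairwise-intersecting boxes $A^X_v\times A^Y_v$ on the torus. The one elementary fact I will lean on throughout is that if two boxes $A^X_u\times A^Y_u$ and $A^X_v\times A^Y_v$ intersect, then $A^X_u\cap A^X_v\neq\emptyset$ and $A^Y_u\cap A^Y_v\neq\emptyset$ separately.

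Next I would look at the circular society $\bar S^X$ on the circle $X$ with approval sets $\{A^X_v : v\in G'\}$. By the fact above these arcs pairwise intersect, so the agreement graph of $\bar S^X$ is complete on $|S|-m+k$ vertices, and \cref{circ_clique} produces a platform $x\in X$ lying in at least $\tfrac{|S|-m+k+1}{2}$ of them. Let $\pi:X\times Y\to X$ be the projection and set $C=\pi^{-1}(x)$, a circle inside the torus. Letting $V_C=\{v\in G' : x\in A^X_v\}$, we get $|V_C|\ge \tfrac{|S|-m+k+1}{2}$, and the slice $S_C$ of $S$ along $C$, with approval sets $\{A_v\cap C : v\in V_C\}=\{\{x\}\times A^Y_v : v\in V_C\}$, is again a circular society. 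Applying the elementary fact in the $Y$-coordinate to the members of $G'$ shows these arcs pairwise intersect, so the agreement graph of $S_C$ is complete on $|V_C|$ vertices; \cref{circ_clique} applied to $S_C$ gives $a(S_C)\ge \tfrac{|V_C|+1}{2}\ge \tfrac12\bigl(\tfrac{|S|-m+k+1}{2}+1\bigr)=\tfrac{|S|-m+k+3}{4}$. Since $a(S)\ge a(S_C)$ and $\tfrac{|S|-m+k+3}{4}\ge\tfrac{|S|-m+k+2}{4}$, the bound follows; note that only $|S|\ge m$ (for \cref{berg_thm11}) is needed, with no large lower bound on $|S|$ as in \cref{main_cor}.

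I do not anticipate a serious obstacle: the whole argument is a two-fold iteration of \cref{circ_clique} glued together by projecting onto $X$ and slicing along $C$. The only place demanding genuine care is the bookkeeping — checking that the relevant families of approval sets remain pairwise-intersecting after projection and after slicing — and this is precisely what the observation about coordinate-projections of intersecting boxes settles. The genuinely ``new'' idea, such as it is, is conceptual: recognizing that the circular clique bound of \cref{circ_clique} can stand in for Helly's Theorem in the direction where Helly is unavailable, at the price of a factor of two.
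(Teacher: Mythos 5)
Your proof is correct and follows essentially the same route as the paper's: extract a clique of size $|S|-m+k$ via \cref{berg_thm11}, project to the circular factor, locate a heavily covered platform $x$ there, and apply \cref{circ_clique} to the slice $S_C$ over $x$. The only difference is that the paper handles the projected circular society with Hardin's \cref{thm:circular} whereas you reuse \cref{circ_clique}, which in fact yields the marginally stronger numerator $|S|-m+k+3$ and still implies the stated bound.
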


\begin{proof}
This proof is analogous to the proof of \cref{cylinder_bound}, except now the set $\{\pi(A) \mid A \in \A'\}$ consists of $|S|-m+k$ pairwise-intersecting intervals in the \textit{circular} society $X$. Hence, this set forms a $(2,2)$-agreeable circular society. Helly's theorem no longer applies. However, by \cref{thm:circular} there exists some platform $x$ in the intersection of at least $\frac{|S|-m+k}{2}$ of these sets. We then define $S_C$ as we did in the proof of \cref{cylinder_bound}, and by \cref{circ_clique}, $a(S_C) \geq \frac{\frac{|S|-m+k}{2}+1}{2} = \frac{|S| - m + k + 2}{4}$. Therefore, $a(S) \ge \frac{|S| - m + k + 2}{4}$ as well.
\end{proof}

The astronomy class mentioned in the Introduction forms a toroidal society by attempting to decide on a time of year to view the stars and a compass direction to point their telescope. 
They realize that even if they form a $(3,4)$-agreeable society $S$, then \cref{tor_bound} only guarantees that $\frac{|S|+1}{4}$ class members will agree on a time and direction combination. While the members of the class are troubled by their {\it potential} disagreement, they are heartened by the fact the theorem only gives a lower bound.

The previous example begs the question: is the lower bound in \cref{tor_bound} sharp? 
We have found sharpness questions in product societies difficult to answer in general, but for $(2, 2)$-agreeable toroidal societies, we are able to show that the bounds in Theorems \ref{main_cor} and \ref{tor_bound} are sharp in certain cases.  Specifically, we construct and analyze an analogue of Hardin's \emph{uniform circular societies} \cite{Hardin} that we will call \emph{uniform toroidal societies}.

\begin{example}[Uniform Toroidal Society] \label{ex:uniformToroidal22}
We construct a family of $(2, 2)$-agreeable \emph{uniform toroidal societies} whose agreement number is equal to $\left\lceil \frac{|S|-1}{4} \right\rceil + 1$.
Because we refer to $|S|$ many times in the following, let $n = |S|$.
These toroidal societies are ``uniform'' in the sense that their approval sets are congruent squares with regular intersection patterns.
We explain the construction in detail for $n \equiv 1 \pmod{4}$, which is the case that exhibits the most symmetry, and then explain briefly how to generalize to other $n$.

Let $n = 4\ell + 1$ for some positive integer $\ell$.
We construct a society $S$ whose spectrum is the quotient space $\R^2/(2n\Z \times 2n\Z)$; this quotient is a torus, with the particular denominator chosen to avoid fractions in the coordinates of approval set corners.
For concreteness, we describe the $n$ approval sets geometrically in the square $X = [0,2n) \times [0,2n)$, which we identify with the quotient space above.

Let $\Gamma$ be a path formed by the intersection of the lines $y=2x$ and $y=2x-2n$ with $X$;
this path is illustrated by the yellow lines in \cref{torus_construction}.
When opposite sides of the square $X$ are identified to form a torus, $\Gamma$ is a closed path on the torus. 
We use $\Gamma$ to specify the relative ordering of the corner points of the rectangular approval sets on the torus, just as Hardin's construction depends on the relative ordering of the endpoints of the approval sets, which are intervals on a circle.

Each of the approval sets $A_0, A_1, \ldots, A_{n-1}$ is an $n \times n$ square, with sides parallel to the  sides of $X$, whose lower-left corners are located at evenly-spaced points along $\Gamma$.
\Cref{torus_construction} illustrates the $n=9$ case, marking these points by yellow dots.
Specifically, for $i \in \{0, 1, \ldots, n-1 \}$, the lower-left corner of $A_i$ is located at $(2i,4i)$, with the second coordinate reduced modulo $2n$.

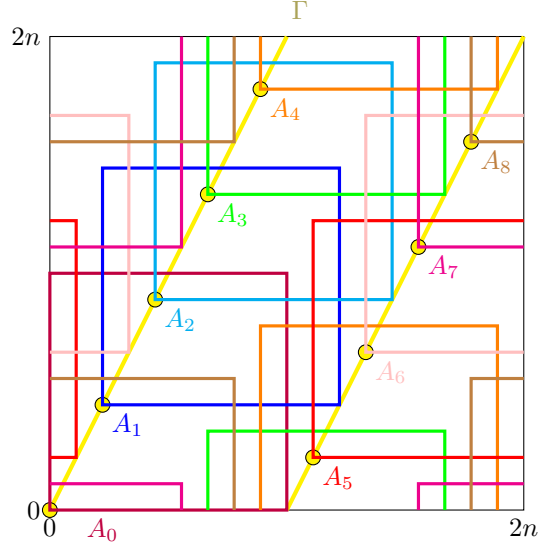
\begin{figure}[h]
\begin{center}
\begin{tikzpicture}[scale=0.35]
    \draw (0,0) rectangle (18,18);
    
    \draw[ultra thick,yellow] (0,0) -- (9,18);
    \draw[ultra thick,yellow] (9,0) -- (18,18);
    
    \foreach \i in {0,2,...,8}
    	\fill[yellow] (\i, 2*\i) circle (8pt);
    \foreach \i in {0,2,...,8}
        \draw[thin] (\i,2*\i) circle (8pt);
    \foreach \i in {10,12,14,16}
    	\fill[yellow] (\i, 2*\i-18) circle (8pt);
    \foreach \i in {10,12,14,16}
        \draw (\i,2*\i-18) circle (8pt);
    
    \def\s{9}
    \draw[very thick, purple] (0,0) rectangle +(\s,\s);
    \draw[very thick, blue] (2,4) rectangle +(\s,\s);
    \draw[very thick, cyan] (4,8) rectangle +(\s,\s);
    \draw[very thick, green] (6,18) -- (6,12) -- (6+\s,12) -- (6+\s,18)
        (6,0) -- (6,\s-6) -- (6+\s,\s-6) -- (6+\s,0);
    \draw[very thick, orange] (8,18) -- (8,16) -- (8+\s,16) -- (8+\s,18)
        (8,0) -- (8,+\s-2) -- (8+\s,\s-2) -- (8+\s,0);
    \draw[very thick,red] (18,2) -- (10,2) -- (10,2+\s) -- (18,2+\s)
        (0,2) -- (1,2) -- (1,2+\s) -- (0,2+\s);
    \draw[very thick,pink] (18,6) -- (12,6) -- (12,6+\s) -- (18,6+\s)
        (0,6) -- (3,6) -- (3,6+\s) -- (0,6+\s);
    \draw[very thick,magenta] (18,10) -- (14,10) -- (14,18)
        (14,0) -- (14,\s-8) -- (18,\s-8)
        (0,10) -- (5,10) -- (5,18)
        (0,\s-8) -- (5,\s-8) -- (5,0);
    \draw[very thick,brown] (18,14) -- (16,14) -- (16,18) 
        (16,0) -- (16,\s-4) -- (18,\s-4)
        (0,14) -- (7,14) -- (7,18) 
        (0,\s-4) -- (7,\s-4) -- (7,0);
    
    \node[below] at (0,0) {$0$};
    \node[left] at (0,0) {$0$};
    \node[below] at (18,0) {$2n$};
    \node[left] at (0,18) {$2n$};
    \node[yellow!60!black] at (9.5,19) {$\Gamma$};
    \node[purple,below] at (2,0) {$A_0$};
    \node[blue,below right] at (2,4) {$A_1$};
    \node[cyan,below right] at (4,8) {$A_2$};
    \node[green,below right] at (6,12) {$A_3$};
    \node[orange,below right] at (8,16) {$A_4$};
    \node[red,below right] at (10,2) {$A_5$};
    \node[pink,below right] at (12,6) {$A_6$};
    \node[magenta,below right] at (14,10) {$A_7$};
    \node[brown,below right] at (16,14) {$A_8$};
\end{tikzpicture}
\caption{Construction of a $(2,2)$-agreeable uniform toroidal society with $n=9$ voters.}
\label{torus_construction}
\end{center}

\end{figure}

Since any two $n \times n$ squares placed in $X$ with sides parallel to the coordinate axes must intersect, the society $S$, constructed above, is $(2,2)$-agreeable.

Recall that $\ell = \frac{n-1}{4}$. We now show that $a(S) = \ell+1$.
Since corresponding corners of each approval set lie on equally-spaced points on $\Gamma$, each approval set has the same pattern of intersection with the others.
Thus, without loss of generality, it suffices to restrict our attention to one approval set, $A_0$. First, the top-right corner point of $A_0$ is contained in $A_1, \ldots, A_\ell$, so $a(S) \ge \ell+1$.
We next show that no point of $A_0$ is contained in more than $\ell$ other approval sets, and hence $a(S) \le \ell+1$ as well.

Consider the four approval sets $A_1, A_{1+\ell}, A_{1+2\ell}, A_{1+3\ell}$, illustrated in \cref{set_intersection}.
Observe that each of these approval sets overlaps a different corner of $A_0$, and that no two of these approval sets intersect within $A_0$.
(We invite the reader to verify this by examining the coordinates of these sets within $X$.)

More generally, for $j \in \{1, \ldots, \ell\}$, the four approval sets $A_j, A_{j+\ell}, A_{j+2\ell}, A_{j+3\ell}$ each intersect one corner of $A_0$, but do not intersect each other within $A_0$. 
To see this, observe that the lower-left corner of $A_j$ is $(2j,4j)$, the upper-right corner is $A_{j+\ell}$ is $(2j+(n-1)/2, 4j-1)$, and so on.
That is, the approval sets $A_1, \ldots, A_{n-1}$ can be partitioned into $\ell$ collections $\C_j = \{ A_j, A_{j+\ell}, A_{j+2\ell}, A_{j+3\ell} \}$, each containing four approval sets, such that for each $j$, no two members of $\C_j$ intersect within $A_0$.

Thus, no point of $A_0$ is contained in more than one of the approval sets in any $C_j$, for any $j \in \{ 1, \ldots, \ell \}$, and so no point of $A_0$ is contained in more than $\ell$ other approval sets, so $a(S) \le \ell+1$.
Therefore, $a(s) = \ell+1 = \frac{n-1}{4}+1$.

\begin{figure}[h]
\begin{center}
\begin{tikzpicture}[scale=0.35]
	\draw[very thick, purple] (0,0) rectangle +(9,9);
    \draw[very thick, blue] (2,10) -- (2,4) -- (11,4);
    \draw[very thick, green] (6,-1) -- (6,3) -- (11,3);
    \draw[very thick, red] (-2,2) -- (1,2) -- (1,10);
    \draw[very thick, magenta] (-2,1) -- (5,1) -- (5,-1);
    \node[purple] at (3.5,2.5) {$A_0$};
    \node[blue,right] at (9,6.5) {$A_1$};
    \node[green,right] at (9,1.5) {$A_{1+\ell}$};
    \node[red,left] at (0,5.5) {$A_{1+2\ell}$};
    \node[magenta,left] at (0,0) {$A_{1+3\ell}$};
\end{tikzpicture}
\caption{The collection $\C_1 = \{ A_1, A_{1+\ell}, A_{1+2\ell}, A_{1+3\ell} \}$ contains four approval sets, no two of which intersect within $A_0$.}
\label{set_intersection}
\end{center}

\end{figure}
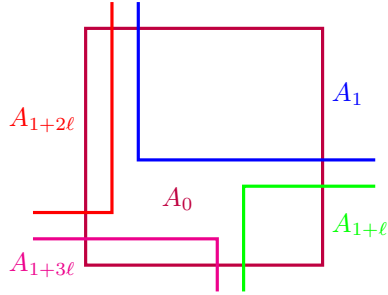

For $n \not\equiv 1 \pmod{4}$, let $\ell = \left\lceil \frac{n-1}{4} \right\rceil$. 
First construct the $(2,2)$-agreeable toroidal society for $4\ell + 1$ voters as above (the number of voters in this society is up to $3$ more than $n$), which has agreement number $a(S) = \ell + 1$.
Then remove the extra approval sets; this cannot increase the agreement number $a(S)$.
Since we do not remove $A_0, A_1, \ldots, A_\ell$, all of which intersect, the agreement number remains $\ell + 1$.
This produces a $(2,2)$-agreeable toroidal society $S$ with $a(S) = \ell + 1 = \left\lceil \frac{n-1}{4} \right\rceil + 1$.

\end{example}

The uniform toroidal societies constructed above allow us to establish the sharpness of \cref{main_cor,tor_bound}, as stated in the following corollary.

\begin{corollary} The lower bound on the agreement number of $(2, 2)$-agreeable toroidal societies given by \cref{main_cor} is sharp when $|S| \equiv 0$ or $1 \pmod{4}$, and the lower bound given by \cref{tor_bound} is sharp when $|S| \equiv 0$, $1$, or $3 \pmod{4}$.
\end{corollary}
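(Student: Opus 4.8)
The plan is to compare the exact agreement number of the uniform toroidal societies built in \cref{ex:uniformToroidal22} against the lower bounds that \cref{main_cor} and \cref{tor_bound} predict for $(2,2)$-agreeable toroidal societies, and to record exactly when the construction meets each bound. Since \cref{ex:uniformToroidal22} already produces, for every $n \ge 1$, a $(2,2)$-agreeable toroidal society $S$ with $|S| = n$ and $a(S) = \left\lceil \frac{n-1}{4}\right\rceil + 1$, the corollary reduces to pure arithmetic.

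First I would specialize the two bounds to $k = m = 2$. In \cref{main_cor}, the hypothesis becomes $|S| \ge 4$ and the conclusion becomes $\frac{a(S)}{|S|} > \frac14$; because $a(S)$ is an integer, this is equivalent to $a(S) \ge \lfloor |S|/4 \rfloor + 1$. In \cref{tor_bound}, the conclusion becomes $a(S) \ge \frac{|S|+2}{4}$, equivalently $a(S) \ge \left\lceil \frac{|S|+2}{4}\right\rceil$. These two integer lower bounds are the quantities to test against the constructed value $\left\lceil \frac{|S|-1}{4}\right\rceil + 1$.

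Next I would run the casework on $|S| \bmod 4$. Writing $|S| = 4q + r$ with $r \in \{0,1,2,3\}$, the constructed agreement number equals $q+1$ when $r \in \{0,1\}$ and $q+2$ when $r \in \{2,3\}$. The \cref{main_cor} bound $\lfloor |S|/4\rfloor + 1$ equals $q+1$ for every $r$, so it is attained exactly when $r \in \{0,1\}$, i.e.\ when $|S| \equiv 0$ or $1 \pmod 4$, and falls one short when $r \in \{2,3\}$. The \cref{tor_bound} bound $\left\lceil \frac{|S|+2}{4}\right\rceil$ equals $q+1$ for $r \in \{0,1,2\}$ and $q+2$ for $r = 3$, so it is attained exactly when $r \in \{0,1,3\}$, i.e.\ when $|S| \equiv 0$, $1$, or $3 \pmod 4$, and falls one short only when $r = 2$. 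Intersecting these observations with the statement of the corollary finishes the proof.

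The substantive work — exhibiting a $(2,2)$-agreeable toroidal society that actually attains these bounds — has already been done in \cref{ex:uniformToroidal22}, so there is no real obstacle remaining. The only points needing care are (i) using the strict inequality in \cref{main_cor}, so that it rounds up to $\lfloor |S|/4\rfloor + 1$ rather than merely $\lceil |S|/4\rceil$, and (ii) verifying that the size hypothesis $|S| \ge 4$ of \cref{main_cor} holds for all $|S|$ in the relevant residue classes that we wish to cover (the only degenerate exception, $|S| = 1$, lies outside that theorem's hypotheses and need not be considered).
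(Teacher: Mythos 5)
Your proposal is correct and follows essentially the same route as the paper: specialize both bounds to $k=m=2$, round the resulting inequalities to integer lower bounds, and check by residue class of $|S| \bmod 4$ that the uniform toroidal societies of \cref{ex:uniformToroidal22}, with agreement number $\left\lceil \frac{|S|-1}{4}\right\rceil + 1$, attain them exactly in the stated cases. Your explicit casework and your remark about the size hypothesis $|S| \ge 4$ of \cref{main_cor} are slightly more careful than the paper's write-up, but the argument is the same.
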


\begin{proof}
By \cref{main_cor}, the agreement number of a $(2, 2)$-agreeable toroidal society satisfies the following inequality: $a(S) > \frac{|S|}{4}$.  When $|S| \equiv 0$ or $1 \pmod{4}$, this inequality is equivalent to
\[ a(S) \geq \left\lceil \frac{|S|-1}{4} \right\rceil + 1. \]
\cref{ex:uniformToroidal22} provides a family of $(2, 2)$-agreeable toroidal societies that achieve this lower bound.  Therefore, the bound in \cref{main_cor} is sharp.

The lower bound given by \cref{tor_bound} is equivalent to that given by \cref{main_cor} for $(2,2)$-agreeable toroidal societies when $|S| \equiv 0$ or $1 \pmod{4}$.
When $|S| \equiv 3 \pmod{4}$, the societies in \cref{ex:uniformToroidal22} achieve the lower bound given by \cref{tor_bound}. Therefore, the bound in \cref{tor_bound} is also sharp in these cases.
\end{proof}

Note that the agreement number of the uniform toroidal societies in \cref{ex:uniformToroidal22} is one greater than the bound given in \cref{main_cor,tor_bound} when $|S| \equiv 2 \pmod{4}$.
However, we suspect that the agreement number of the societies in \cref{ex:uniformToroidal22} is lowest possible for $(2,2)$-agreeable toroidal societies.
We offer the following conjecture.

\begin{conjecture}
The lower bound on the agreement number of $(2, 2)$-agreeable toroidal societies given by \cref{main_cor,tor_bound}, $ a(S) > \frac{|S|}{4}$, is not sharp for $|S| \equiv 2 \pmod{4}$.
\end{conjecture}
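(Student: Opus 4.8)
Let $S$ be a $(2,2)$-agreeable toroidal society with $|S| = n$, where $n \equiv 2 \pmod 4$; write $n = 4\ell + 2$. Since the smallest integer exceeding $\tfrac n4 = \ell + \tfrac12$ is $\ell+1$, the bound $a(S) > \tfrac{|S|}{4}$ fails to be sharp precisely when no such society has $a(S) = \ell+1$, i.e., when $a(S) \ge \ell+2$ for every such society. So the plan is to prove this inequality. It would then be best possible, since \cref{ex:uniformToroidal22} already produces $(2,2)$-agreeable toroidal societies with $n = 4\ell+2$ and $a(S) = \ell+2 = \left\lceil \tfrac{n-1}{4} \right\rceil + 1$.

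The approach is to re-run the iterated-projection argument behind \cref{general_theorem,main_cor} in this special case, but keeping track of integer parts and exploiting the fact that Hardin's \cref{thm:circular} is a \emph{strict} inequality. Write $S = (X\times Y, V, \A^{X \times Y})$, and recall that each approval set has the form $A_v^X \times A_v^Y$ with $A_v^X$ an arc of the circle $X$ and $A_v^Y$ an arc of the circle $Y$; consequently $A_v \cap A_w \neq \emptyset$ forces both $A_v^X \cap A_w^X \neq \emptyset$ and $A_v^Y \cap A_w^Y \neq \emptyset$. First I would project onto $X$: the society $S^X$ is a $(2,2)$-agreeable circular society of size $n$, so \cref{thm:circular} supplies a platform $p^X$ with $a(S^X) > \tfrac n2 = 2\ell+1$, whence $a(S^X) \ge 2\ell+2$ since $a(S^X)$ is an integer. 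Next, pass to $\st_{p^X}$, the sub-society on the voters approving $p^X$: it is $(2,2)$-agreeable with $|\st_{p^X}| = a(S^X) \ge 2\ell+2$, and the observation above shows that its projection $\st^Y_{p^X}$ onto $Y$ is again a $(2,2)$-agreeable circular society, of size $a(S^X)$. Applying \cref{thm:circular} a second time, $a(\st^Y_{p^X}) > \tfrac{a(S^X)}{2} \ge \ell+1$, so $a(\st^Y_{p^X}) \ge \ell+2$. Finally \cref{genlem} gives $a(S) \ge a(\st^Y_{p^X}) \ge \ell+2$, as desired.

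I expect the only delicate point to be this integrality bookkeeping: the proofs of \cref{main_cor,tor_bound} knowingly weaken Hardin's strict bound to ``at least $\tfrac n2$ voters agree,'' and it is exactly the ``$+1$'' discarded there that one must now carry through both applications of \cref{thm:circular}. One should check that this sharpens the conclusion only in the intended residue class: the bound obtained, $\bigl\lfloor \tfrac{\lfloor n/2\rfloor + 1}{2} \bigr\rfloor + 1$, equals $\lfloor n/4 \rfloor + 1$ (already known to be sharp) for $n \equiv 0,1 \pmod 4$, equals the bound of \cref{tor_bound} for $n \equiv 3 \pmod 4$, and strictly exceeds $\tfrac n4$ only for $n \equiv 2 \pmod 4$, so the argument is consistent with all the sharpness results already established. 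An equivalent route, closer to \cref{sec:cylindrical}, slices $S$ along the circle $C = \pi^{-1}(p^X)$: because the approval sets are products of arcs, any two of the at least $2\ell+2$ sliced approval sets meet inside $C$, so $S_C$ is a circular society whose agreement graph is complete on at least $2\ell+2$ vertices, and \cref{circ_clique} gives $a(S) \ge a(S_C) \ge \tfrac{(2\ell+2)+1}{2}$, which rounds up to $\ell+2$. Since the geometric input coincides with what already drives \cref{cylinder_bound,tor_bound}, I would expect this plan to settle the conjecture in full.
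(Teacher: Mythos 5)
This statement is left as a \emph{conjecture} in the paper---there is no proof of record to compare against---so I have evaluated your argument on its own terms, and I believe it is correct. The entire content of your proof is the observation that the paper's own \cref{general_theorem} discards integrality twice: it weakens Hardin's strict bound $a(S^X) > |S|/2$ to $a(S^X) \ge |S|/2$, and does the same at the second projection, and that is precisely where the gain needed for $|S| \equiv 2 \pmod 4$ is thrown away. Carrying the floors through both applications of \cref{thm:circular} gives, for $n = |S|$, the bound $a(S) \ge \bigl\lfloor \tfrac{\lfloor n/2\rfloor + 1}{2} \bigr\rfloor + 1 = \bigl\lceil \tfrac{n-1}{4} \bigr\rceil + 1$, which equals $\ell + 2$ when $n = 4\ell + 2$ and exactly matches the agreement number of the uniform toroidal societies of \cref{ex:uniformToroidal22} in every residue class; so your argument not only settles the conjecture but identifies the sharp bound for all $(2,2)$-agreeable toroidal societies. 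The two hypotheses you must (and do) verify are: (i) because approval sets are products $A_v^X \times A_v^Y$ of arcs, $(2,2)$-agreeability of $S$ forces pairwise intersection of both coordinate projections, so $S^X$ and the restricted projection $\widetilde{S}^Y_{p^X}$ are genuine $(2,2)$-agreeable circular societies to which \cref{thm:circular} applies (this is special to $k = m = 2$, where agreeability is a pairwise condition inherited by every subsociety---the same bookkeeping for general $(k,m)$ would require the size hypothesis of \cref{general_theorem}); and (ii) \cref{genlem} transfers the conclusion back to $S$, since a point $q^Y$ common to the $Y$-arcs of voters who already share $p^X$ yields the common platform $(p^X, q^Y)$. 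Your alternative route through $\pi^{-1}(p^X)$ and \cref{circ_clique} is the same argument in the language of \cref{sec:cylindrical} and gives the identical numbers. I see no gap; the only caveat is the one inherited from the paper itself, namely that \cref{thm:circular} is quoted with a strict inequality and your proof leans on that strictness at both stages, so the result is only as solid as that citation.
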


We invite the reader to investigate the sharpness of our lower bounds for the agreement number of $(k,m)$-agreeable toroidal societies where $k$ and $m$ are not both $2$.

\section*{Acknowledgments}
The authors started this project at the 2014 AMS Mathematics Research Community on Algebraic and Geometric Methods in Applied Discrete Mathematics, which was supported
by NSF DMS-1321794.
We express great thanks to Francis Su for inviting us to consider approval voting problems through a focus group at the MRC, which provided inspiration for this paper, and also for his encouragement and helpful comments throughout the course of this work.
We thank Kathryn Nyman for her leadership and discussions at the MRC.
We also thank the two anonymous referees for their valuable suggestions.

\end{document}